\newcommand{\numberseries}{\bfseries}   %Fontseries used for numbering
\newlength{\thmtopspace}                %Space above theorem
\newlength{\thmbotspace}                %Space below theorem
\newlength{\thmheadspace}               %Space after theorem label
\newlength{\thmindent}                  %For indenting
\newtheoremstyle{fixed bf head,slanted body}
                {\thmtopspace}{\thmbotspace}{\slshape}
                {\thmindent}{\bfseries}{.}{\thmheadspace}
                {{\numberseries \thmnumber{#2\;}}\thmname{#1}\thmnote{ (#3)}}
\newtheoremstyle{variable bf head,slanted body}
                {\thmtopspace}{\thmbotspace}{\slshape}
                {\thmindent}{\bfseries}{.}{\thmheadspace}
                {{\numberseries \thmnumber{#2\;}}\thmname{#1}\thmnote{ #3}}
\newtheoremstyle{fixed bf head,upright body}
                {\thmtopspace}{\thmbotspace}{\upshape}
                {\thmindent}{\bfseries}{.}{\thmheadspace}
                {{\numberseries \thmnumber{#2\;}}\thmname{#1}\thmnote{ (#3)}}
\newtheoremstyle{numbered paragraph}
                {\thmtopspace}{\thmbotspace}{\upshape}
                {\thmindent}{\upshape}{}{\thmheadspace}
                {{\numberseries \thmnumber{#2.}}}
\theoremstyle{fixed bf head,slanted body}
\newtheorem{res}{}[section]
\newtheorem{thm}[res]{Theorem}          \newtheorem*{thm*}{Theorem}
\newtheorem{prp}[res]{Proposition}      \newtheorem*{prp*}{Proposition}
        \newtheorem*{cor*}{Corollary}
\newtheorem{lem}[res]{Lemma}            \newtheorem*{lem*}{Lemma}
\theoremstyle{variable bf head,slanted body}
     \newtheorem*{introthm*}{Theorem}
\theoremstyle{fixed bf head,upright body}
\newtheorem{stp}[res]{Setup}            \newtheorem*{stp*}{Setup}
\newtheorem{dfn}[res]{Definition}       \newtheorem*{dfn*}{Definition}
     \newtheorem*{con*}{Construction}
\newtheorem{obs}[res]{Observation}      \newtheorem*{obs*}{Observation}
\newtheorem{rmk}[res]{Remark}           \newtheorem*{rmk*}{Remark}
\newtheorem{exa}[res]{Example}          \newtheorem*{exa*}{Example}
\newtheorem{qst}[res]{Question}         \newtheorem*{qst*}{Question}
\theoremstyle{numbered paragraph}
\newlength{\thmlistleft}        %leftmargin
\newlength{\thmlistright}       %rightmargin
\newlength{\thmlistpartopsep}   %partopsep
\newlength{\thmlisttopsep}      %topsep
\newlength{\thmlistparsep}      %parsep
\newlength{\thmlistitemsep}     %itemsep
\newcounter{eqc} 
\newenvironment{eqc}{\begin{list}{\upshape (\textit{\roman{eqc}})}%
    {\usecounter{eqc}%
      \setlength{\leftmargin}{\thmlistleft}%
      \setlength{\labelwidth}{\thmlistleft}%
      \setlength{\rightmargin}{\thmlistright}%
      \setlength{\partopsep}{\thmlistpartopsep}%
      \setlength{\topsep}{\thmlisttopsep}%
      \setlength{\parsep}{\thmlistparsep}%
      \setlength{\itemsep}{\thmlistitemsep}}}%
  {\end{list}}%
\newcommand{\eqclbl}[1]{{\upshape(\textit{#1})}}
\newcounter{prt}
\newenvironment{prt}{\begin{list}{\upshape (\alph{prt})}%
    {\usecounter{prt}%
      \setlength{\leftmargin}{\thmlistleft}%
      \setlength{\labelwidth}{\thmlistleft}%
      \setlength{\rightmargin}{\thmlistright}%
      \setlength{\partopsep}{\thmlistpartopsep}%
      \setlength{\topsep}{\thmlisttopsep}%
      \setlength{\parsep}{\thmlistparsep}%
      \setlength{\itemsep}{\thmlistitemsep}}}%
  {\end{list}}%
  \newcommand{\proofofimp}[3][:]{\mbox{\eqclbl{#2}$\!\implies\!$\eqclbl{#3}#1}}
\newcommand{\pgref}[1]{\ref{#1}}
\newcommand{\thmref}[2][Theorem~]{#1\pgref{thm:#2}}
\newcommand{\prpref}[2][Proposition~]{#1\pgref{prp:#2}}
\newcommand{\lemref}[2][Lemma~]{#1\pgref{lem:#2}}
\newcommand{\obsref}[2][Observation~]{#1\pgref{obs:#2}}
\newcommand{\dfnref}[2][Definition~]{#1\pgref{dfn:#2}}
\newcommand{\exaref}[2][Example~]{#1\pgref{exa:#2}}
\newcommand{\rmkref}[2][Remark~]{#1\pgref{rmk:#2}}
\newcommand{\secref}[2][Section~]{#1\ref{sec:#2}}
\renewcommand{\eqref}[1]{(\pgref{eq:#1})}
\newcommand{\stpref}[2][Setup~]{#1\pgref{stp:#2}}
\def\@nobreak@#1{\mathchoice%
  {\nobreakdef@\displaystyle\f@size{#1}}%
  {\nobreakdef@\nobreakstyle\tf@size{\firstchoice@false #1}}%
  {\nobreakdef@\nobreakstyle\sf@size{\firstchoice@false #1}}%
  {\nobreakdef@\nobreakstyle\ssf@size{\firstchoice@false #1}}%
  \check@mathfonts}%
\def\nobreakdef@#1#2#3{\hbox{{%
                    \everymath{#1}%
                    \let\f@size#2\selectfont%
                    #3}}}%
\DeclareSymbolFont{usualmathcal}{OMS}{cmsy}{m}{n}
\DeclareSymbolFontAlphabet{\mathcal}{usualmathcal}
\DeclareSymbolFont{letters}{OML}{txmi}{m}{it}
\DeclareMathSymbol{\alpha}{\mathord}{letters}{"0B}
\DeclareMathSymbol{\beta}{\mathord}{letters}{"0C}
\DeclareMathSymbol{\gamma}{\mathord}{letters}{"0D}
\DeclareMathSymbol{\delta}{\mathord}{letters}{"0E}
\DeclareMathSymbol{\epsilon}{\mathord}{letters}{"0F}
\DeclareMathSymbol{\zeta}{\mathord}{letters}{"10}
\DeclareMathSymbol{\eta}{\mathord}{letters}{"11}
\DeclareMathSymbol{\theta}{\mathord}{letters}{"12}
\DeclareMathSymbol{\iota}{\mathord}{letters}{"13}
\DeclareMathSymbol{\kappa}{\mathord}{letters}{"14}
\DeclareMathSymbol{\lambda}{\mathord}{letters}{"15}
\DeclareMathSymbol{\mu}{\mathord}{letters}{"16}
\DeclareMathSymbol{\nu}{\mathord}{letters}{"17}
\DeclareMathSymbol{\xi}{\mathord}{letters}{"18}
\DeclareMathSymbol{\pi}{\mathord}{letters}{"19}
\DeclareMathSymbol{\rho}{\mathord}{letters}{"1A}
\DeclareMathSymbol{\sigma}{\mathord}{letters}{"1B}
\DeclareMathSymbol{\tau}{\mathord}{letters}{"1C}
\DeclareMathSymbol{\upsilon}{\mathord}{letters}{"1D}
\DeclareMathSymbol{\phi}{\mathord}{letters}{"1E}
\DeclareMathSymbol{\chi}{\mathord}{letters}{"1F}
\DeclareMathSymbol{\psi}{\mathord}{letters}{"20}
\DeclareMathSymbol{\omega}{\mathord}{letters}{"21}
\DeclareMathSymbol{\varepsilon}{\mathord}{letters}{"22}
\DeclareMathSymbol{\vartheta}{\mathord}{letters}{"23}
\DeclareMathSymbol{\varpi}{\mathord}{letters}{"24}
\DeclareMathSymbol{\varrho}{\mathord}{letters}{"25}
\DeclareMathSymbol{\varsigma}{\mathord}{letters}{"26}
\DeclareMathSymbol{\varphi}{\mathord}{letters}{"27}
\DeclareMathSymbol{\Gamma}{\mathord}{letters}{"00}
\DeclareMathSymbol{\Delta}{\mathord}{letters}{"01}
\DeclareMathSymbol{\Theta}{\mathord}{letters}{"02}
\DeclareMathSymbol{\Lambda}{\mathord}{letters}{"03}
\DeclareMathSymbol{\Xi}{\mathord}{letters}{"04}
\DeclareMathSymbol{\Pi}{\mathord}{letters}{"05}
\DeclareMathSymbol{\Sigma}{\mathord}{letters}{"06}
\DeclareMathSymbol{\Upsilon}{\mathord}{letters}{"07}
\DeclareMathSymbol{\Phi}{\mathord}{letters}{"08}
\DeclareMathSymbol{\Psi}{\mathord}{letters}{"09}
\DeclareMathSymbol{\Omega}{\mathord}{letters}{"0A}
\DeclareMathSymbol{\upGamma}{\mathalpha}{operators}{"00}
\DeclareMathSymbol{\upDelta}{\mathalpha}{operators}{"01}
\DeclareMathSymbol{\upTheta}{\mathalpha}{operators}{"02}
\DeclareMathSymbol{\upLambda}{\mathalpha}{operators}{"03}
\DeclareMathSymbol{\upXi}{\mathalpha}{operators}{"04}
\DeclareMathSymbol{\upPi}{\mathalpha}{operators}{"05}
\DeclareMathSymbol{\upSigma}{\mathalpha}{operators}{"06}
\DeclareMathSymbol{\upUpsilon}{\mathalpha}{operators}{"07}
\DeclareMathSymbol{\upPhi}{\mathalpha}{operators}{"08}
\DeclareMathSymbol{\upPsi}{\mathalpha}{operators}{"09}
\DeclareMathSymbol{\upOmega}{\mathalpha}{operators}{"0A}
\newcommand{\MCM}{\mathsf{MCM}}
\newcommand{\I}{\mathsf{I}}
\newcommand{\Ker}[1]{\operatorname{Ker}#1}
\renewcommand{\Im}[1]{\operatorname{Im}#1}
\newcommand{\Hom}[3][R]{\operatorname{Hom}_{#1}(#2,#3)}
\newcommand{\Ext}[4][R]{\operatorname{Ext}_{#1}^{#2}(#3,#4)}
\newcommand{\Tor}[4][R]{\operatorname{Tor}^{\mspace{2mu}#1}_{#2}(#3,#4)}
\newcommand{\depthR}[1][R]{\operatorname{depth}\mspace{2mu}#1}
\newcommand{\depth}[2][R]{\operatorname{depth}_{#1}#2}
\renewcommand{\dim}[1][R]{\operatorname{dim}\,#1}
\newcommand{\dual}{\upOmega}
\newcommand{\te}{R \ltimes \dual}
\newcommand{\m}{\mathfrak{m}}
\newcommand{\n}{\mathfrak{n}}
\newcommand{\p}{\mathfrak{p}}
\newcommand{\q}{\mathfrak{q}}
\newcommand{\Koszul}[2]{\operatorname{K}_\bullet^{#1}(#2)}
\newcommand{\Ltp}[3][R]{#2\otimes_{#1}^{\mathbf{L}}#3}
\newcommand{\wbbCM}{\mathsf{wbbCM}}
\begin{document}

\title[The structure of balanced big CM modules over CM rings]{The structure of balanced big Cohen--Macaulay modules over Cohen--Macaulay rings}

\author{Henrik Holm}

\address{University of Copenhagen, 2100 Copenhagen {\O}, Denmark}
 
\email{holm@math.ku.dk}

\urladdr{http://www.math.ku.dk/\~{}holm/}

%\date{\today}

\keywords{(Balanced) big Cohen--Macaulay module; cotorsion pair; cover; direct limit closure; Gorenstein flat module; maximal Cohen--Macaulay module; preenvelope; trivial extension.}

\subjclass[2010]{Primary 13C14. Secondary 13D05; 13D07.}

%13C14 Cohen-Macaulay modules 
%13D05 Homological dimension
%13D07 Homological functors on modules (Tor, Ext, etc.)

\begin{abstract}
  Over a Cohen--Macaulay (CM) local ring, we characterize those modules that can be obtained as a direct limit of finitely generated maximal CM modules. We point out two consequences of this characterization: (1) Every balanced big CM module, in the sense of Hochster, can be written as a direct limit of small CM modules. In analogy with Govorov and Lazard's characterization of flat modules as direct limits of finitely generated free modules, one can view this as a ``structure theorem'' for balanced big CM modules. (2) Every finitely generated module has a preenvelope with respect to the class of finitely generated maximal CM modules. This result is, in some sense, dual to the existence of maximal CM approximations, which is proved by Auslander and Buchweitz.
\end{abstract}

\maketitle

%$\alpha \beta \gamma \delta \epsilon \varepsilon \zeta \eta \theta \vartheta \iota \kappa \lambda \mu \nu o \pi \varpi \rho \varrho \sigma \varsigma \tau \upsilon \phi \varphi \chi \psi \omega$

\section{Introduction}

Let $R$ be a local ring. Hochster \cite{Hochster} defines an $R$-module $M$~to be \emph{big Cohen--Macaulay} (big CM) if some system of parameters (s.o.p.)~of $R$ is an $M$-regular sequence. If every s.o.p.~of $R$ is an $M$-regular sequence, then $M$ is called \emph{balanced big CM}. The term ``big'' refers to the fact that $M$ need not be finitely generated; and a finitely generated (balanced) big CM module is called a \emph{small CM module}. It is conjectured by Hochster, see (2.1) in \emph{loc.~cit.}, that every local ring has a big CM module. This conjecture is still open, however, it has been settled affirmatively by Hochster \cite{Hochster75b, Hochster75a} in the equicharacteristic case, that is, for local rings containing a field. In fact, such a ring always has a balanced big CM~module. 

Although this paper makes no contribution to Hochster's conjecture, it is concerned with balanced big CM modules. We study such modules over a CM ring $R$ with a \mbox{dualizing} module $\dual$.
In this setting, the conjecture is of course trivially true since small, and hence also balanced big, CM modules abound (for example, $R$ and $\dual$ are such modules). It turns out that all balanced big CM $R$-modules share a common ``structure'': they can always be build from small ones. The following result is proved in \secref{direct-limit-closure}.

\begin{introthm*}[A]
Every balanced big CM $\,R$-module is a direct limit of small CM $\,R$-modules.
\end{introthm*}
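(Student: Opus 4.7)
The plan is first to establish a characterization of those $R$-modules that arise as direct limits of finitely generated maximal CM modules, and then to verify that every balanced big CM module meets the characterizing condition. In analogy with the Govorov--Lazard theorem for flat modules, one expects this characterization to take the shape of a vanishing condition for a suitable $\operatorname{Ext}$- or $\operatorname{Tor}$-functor.

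Because $R$ is CM with dualizing module $\dual$, the trivial extension $\te$ is Iwanaga--Gorenstein, and the natural inclusion of $R$-modules into $\te$-modules (via the canonical surjection $\te \twoheadrightarrow R$) should identify the finitely generated maximal CM $R$-modules with the finitely generated Gorenstein projective $\te$-modules. The direct limits of modules in $\MCM$ would then correspond, via this embedding, to the class of Gorenstein flat $\te$-modules, invoking a Govorov--Lazard-type theorem for Gorenstein flat modules over a Gorenstein ring (due to Enochs--Jenda, and refined by Christensen--Frankild--Holm).

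With this machinery in hand, proving Theorem A reduces to showing that every balanced big CM $R$-module $M$ is Gorenstein flat when viewed as a $\te$-module. The hypothesis that every system of parameters $\underline{x}$ of $R$ is $M$-regular is equivalent, via Koszul homology, to the vanishing of $H_i(\Koszul{\underline{x}}{M})$ for $i > 0$. Combined with the CM property of $R$ and the dualizing property of $\dual$, these vanishings should supply the complete flat resolution over $\te$ witnessing that $M$ is Gorenstein flat, and the structure theorem then delivers $M$ as a direct limit of modules in $\MCM$.

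The main obstacle will be this final verification: translating the regular-sequence data on $M$ into the $\operatorname{Tor}$-vanishing needed to produce a complete flat resolution of $M$ over $\te$, and confirming that the $\te$-modules obtained in this way really do come from balanced big CM $R$-modules. The Auslander--Buchweitz theorem on MCM approximations, mentioned in the abstract as the dual companion to the preenvelope result, is likely to be instrumental for controlling the finitely generated constituents of the direct system and for making the comparison between the $R$- and $\te$-module structures precise.
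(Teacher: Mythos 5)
Your proposal takes essentially the same route as the paper: reduce Theorem~A to a Govorov--Lazard-type characterization of $\varinjlim(\MCM)$ (the paper's Theorem~B), in which the key device is to pass to the trivial extension $\te$, observe it is Gorenstein, and identify the direct limit closure of $\MCM$ with modules that are Gorenstein flat over $\te$. A few of the details you leave open are worth flagging, because they are where the actual work lives.

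First, Theorem~A speaks of \emph{small} CM modules, which by convention exclude $0$, whereas the direct-limit characterization naturally produces a system of finitely generated maximal CM modules, some of which may be zero. The paper closes this gap by replacing each $M_i$ with $M_i \oplus R$ and each structure map $\varphi_{ji}$ with $\varphi_{ji} \oplus 0$; the colimit is unchanged and every term is now nonzero. Second, your translation ``every s.o.p.\ of $R$ is $M$-regular $\Leftrightarrow$ $\operatorname{H}_i(\Koszul{}{\underline{x}}\otimes M)=0$ for $i>0$'' conflates regular with \emph{weak} regular sequences; Koszul vanishing gives weak regularity, and the missing condition $(\underline{x})M\neq M$ is what separates balanced big CM from weak balanced big CM. This is harmless here, since a balanced big CM module is in particular weak balanced big CM, which is all Theorem~B needs. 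Third, and more seriously, the tool you designate for ``making the comparison between the $R$- and $\te$-module structures precise'' is wrong: Auslander--Buchweitz plays no role in Theorem~A or B (it enters only for Theorems~C and D). What the argument actually needs is (a) Lenzing's criterion for membership in a direct limit closure (factor every map from a finitely generated module through $\MCM$), (b) Enochs--Jenda's lemma that over a Gorenstein ring a map from a finitely generated module to a Gorenstein flat module factors through a finitely generated Gorenstein projective, and (c) a depth comparison along the finite local homomorphism $R \to \te$, taken from Iyengar--Sather-Wagstaff, to show the resulting $\te$-module restricts to a maximal CM $R$-module. For the forward implication, the paper computes $\operatorname{Gfd}_{\te}M$ via a Chouinard-type formula and shows the relevant depth inequality using Koszul complexes and the Tor-vanishing you anticipate. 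So your strategy is correct, but the auxiliary results you cite are not the ones that do the work.
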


\exaref{Griffith1} exhibits a (non-balanced) big CM module that is \emph{not} a direct limit of small CM modules. 

A \emph{finitely generated maximal CM module} is a module which is either small CM or zero; a covention used by Auslander and Buchweitz \cite{MAsROB89} and others. Theorem~A is a consequence of the next result---also proved in \secref{direct-limit-closure}---which gives two equivalent characterizations of the direct limit closure of the class of finitely generated maximal CM modules.

\begin{introthm*}[B]
  For every $R$-module $M$, the following conditions are equivalent.
\begin{eqc}
\item $M$ is a direct limit of finitely generated maximal CM $\,R$-modules.
\item Every system of parameters of $R$ is a weak $M$-regular sequence.
\item $M$ is Gorenstein flat\footnote{\ In the sense of Enochs, Jenda, and Torrecillas \cite{EJT-93}; see \dfnref{G-flat}. } viewed as a module over the trivial extension $\te$.
\end{eqc}
\end{introthm*}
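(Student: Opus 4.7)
The plan is to establish the cyclic chain (i)$\Rightarrow$(ii)$\Rightarrow$(iii)$\Rightarrow$(i). The first implication is routine: every nonzero finitely generated maximal CM module has every s.o.p.\ as a regular sequence, and the zero module satisfies weak regularity vacuously. Since direct limits of modules are exact, the short exact sequences
\[
0 \to M_\alpha/(x_1,\ldots,x_{j-1})M_\alpha \xrightarrow{x_j} M_\alpha/(x_1,\ldots,x_{j-1})M_\alpha \to M_\alpha/(x_1,\ldots,x_j)M_\alpha \to 0
\]
pass to the direct limit $M$; only \emph{weak} regularity is guaranteed, because the final quotient $M/(x_1,\ldots,x_d)M$ may become zero.

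For (iii)$\Rightarrow$(i), I would invoke the classical fact that, when $\dual$ is dualizing for $R$, the trivial extension $\te$ is Gorenstein of the same Krull dimension as $R$. Over a Gorenstein local ring the finitely generated Gorenstein projective modules coincide with the finitely generated maximal CM modules, and every Gorenstein flat module is a direct limit of finitely generated Gorenstein projectives (Enochs--Jenda--Torrecillas; also accessible through cotorsion-pair methods). Restriction of scalars along $\te\twoheadrightarrow R$ is exact, commutes with colimits, preserves Krull dimension and depth (an $R$-regular sequence lifts through $r\mapsto(r,0)$ to a $\te$-regular sequence), and sends an $R$-module---regarded as a $\te$-module via $\te\to R$---back to itself. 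Composing these observations presents $M$ as a direct limit of finitely generated maximal CM $R$-modules.

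The heart of the proof is (ii)$\Rightarrow$(iii). For $M$ viewed as a $\te$-module through $\te\to R$, every element $(r,\omega)\in\te$ acts on $M$ as multiplication by $r$, so any Koszul complex $\Koszul{y_1,\ldots,y_d}{\te}\otimes_{\te} M$ on a $\te$-s.o.p.\ $y_i=(r_i,\omega_i)$ coincides with the $R$-Koszul complex $\Koszul{r_1,\ldots,r_d}{R}\otimes_R M$. Since $\te$-systems of parameters project modulo $\dual$ onto $R$-systems of parameters, (ii) is therefore equivalent to $\Tor[\te]{i}{\te/\q}{M}=0$ for all $i\geq 1$ and every parameter ideal $\q$ of $\te$. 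The plan is to upgrade this to Gorenstein flatness by using the characterization---valid over the Gorenstein ring $\te$---that Gorenstein flat modules are exactly those whose Tor vanishes in positive degrees against every $\te$-module of finite injective (equivalently, finite flat) dimension, and then filtering any such test module by Koszul-type quotients on parameter ideals to reduce to the vanishing already in hand. The main obstacle is precisely this reduction: showing that the finite-length quotients $\te/\q$ are homologically sufficient to detect Gorenstein flatness over $\te$. I expect the cleanest route to proceed via Matlis duality on $\te$, recasting the question as Gorenstein injectivity of the character module $M^+$, for which the injective resolution of the residue field supplies the inductive structure needed to reduce to the Tor-vanishing supplied by~(ii).
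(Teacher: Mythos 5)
Your implications \proofofimp[]{i}{ii} and \proofofimp[]{iii}{i} are broadly in line with what the paper does, though with imprecisions worth flagging. For \proofofimp[]{iii}{i} you conflate the two ring maps: the restriction that carries the finitely generated Gorenstein projective $(\te)$-modules $G$ back to $R$-modules is along $R\to\te$, not $\te\twoheadrightarrow R$, and the nontrivial point is that $\depth_{\te}G=\depth_R G$; the paper handles this by citing Iyengar and Sather-Wagstaff, whereas your remark that regular sequences ``lift through $r\mapsto(r,0)$'' needs to be turned into an actual depth comparison for the module $G$. Also, the result you invoke (Gorenstein flat modules are direct limits of f.g.\ Gorenstein projectives over a Gorenstein ring) is not in the Enochs--Jenda--Torrecillas note you cite; the paper proves exactly this input by applying Lenzing's criterion together with the factorization lemma \cite[Lem.~10.3.6]{rha}.

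The real problem is \proofofimp[]{ii}{iii}, which you rightly call the heart, but which your plan does not close. You reduce \eqclbl{ii} to $\Tor[\te]{i}{(\te)/\q}{M}=0$ for parameter ideals $\q$ generated by sequences of the form $(x_j,0)$ (this much is correct, via the Koszul identification). But then you need to pass from Tor-vanishing against this rather thin family of finite-length cyclic modules to full Gorenstein flatness over $\te$, and neither of your two suggested routes is a theorem as stated. Tor-vanishing against all injectives (or all modules of finite injective dimension) does not by itself characterize Gorenstein flatness in general, and even granting that it does over the Gorenstein ring $\te$, there is no obvious filtration of an arbitrary injective $(\te)$-module by quotients $(\te)/\q$ on parameter ideals, so the ``reduction to Koszul-type quotients'' is not available. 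The Matlis-dual route faces the analogous obstruction: after dualizing you would need to detect Gorenstein injectivity of $M^{+}$ from Ext-vanishing against the same thin family, which again is not a known criterion. The paper sidesteps all of this with a genuinely different idea: since $\te$ is Gorenstein, $M$ automatically has finite Gorenstein flat dimension over $\te$, and one can compute that dimension by the Chouinard formula $\operatorname{Gfd}_{\te}M=\sup_{\q}\bigl(\depthR[(\te)_\q]-\depth[(\te)_\q]{M_\q}\bigr)$. The problem then \emph{localizes}: at each prime $\q=\p\ltimes\dual$ one picks a single well-chosen $R$-regular sequence in $\p$, lifts it, and uses \cite[Lem.~5.3.5(b)]{lnm} together with the Koszul/Tor computation to bound the depth difference by zero. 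This prime-by-prime argument is exactly what your global Tor-test plan is missing, and it is where you would need a new idea to rescue the proposal.
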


This theorem is analogous to a classic result, due to Govorov \cite{VEG65} and Lazard \cite{DLz69},~which shows that the direct limit closure of the class of finitely generated free modules is precisely the class of flat modules. Following Hochster's terminology, it is reasonable to call a module \emph{weak balanced big CM} if it satisfies condition \eqclbl{ii} in Theorem~B. This class of modules is denoted by $\wbbCM$, and it is a natural extension of the class of finitely generated maximal CM modules to the realm of all modules. Indeed, by Nakayama's lemma, a finitely generated module belongs to $\wbbCM$ if and only if it is maximal CM.

In \secref{rha} we give applications of Theorem~B in relative homological algebra. It follows from works of Ischebeck \cite{FIs69} and Auslander and Buchweitz \cite{MAsROB89} that the class $\MCM$ of finitely generated maximal CM $R$-modules is part of a complete hereditary cotorsion pair $(\MCM,\MCM^\perp)$ on the category of finitely generated $R$-modules\footnote{\ Actually, $\MCM^\perp$ is the class of finitely generated $R$-modules with finite injective dimension; cf.~\thmref[Thm.~]{MCM-cotorsion-pair}.}. In particular, every finitely generated $R$-module has an $\MCM$-precover and an $\MCM^\perp$-preenvelope. We show:

\begin{introthm*}[C]
  Every finitely generated $R$-module has an $\MCM$-preenvelope.
\end{introthm*}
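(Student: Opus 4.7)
The approach is to leverage the self-duality of $\MCM$ given by the contravariant functor $(-)^\vee := \Hom{-}{\dual}$. Since $R$ is CM with dualizing module, this duality restricts to $\MCM$: for every $M \in \MCM$ one has $M^\vee \in \MCM$, the higher groups $\Ext[R]{i}{M}{\dual}$ vanish, and the biduality morphism $\eta_M \colon M \to M^{\vee\vee}$ is an isomorphism. Moreover, for any finitely generated $R$-module $N$, the dual $N^\vee$ is again finitely generated. The strategy is then to convert the (classical) Auslander--Buchweitz existence of $\MCM$-\emph{precovers} into the existence of $\MCM$-\emph{preenvelopes} by dualizing twice.

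Concretely, given a finitely generated $N$, I would apply the Auslander--Buchweitz theorem to $N^\vee$ to obtain an $\MCM$-precover $g \colon X \to N^\vee$, and then propose
$$\varphi \,:=\, g^\vee \circ \eta_N \colon\, N \longrightarrow X^\vee$$
as the required preenvelope (noting $X^\vee \in \MCM$). To check the preenvelope property, let $h \colon N \to C$ be any morphism with $C \in \MCM$. Applying $(-)^\vee$ yields $h^\vee \colon C^\vee \to N^\vee$ with $C^\vee \in \MCM$, so the precover property of $g$ provides a lift $k \colon C^\vee \to X$ with $g \circ k = h^\vee$. Dualizing back gives $k^\vee \circ g^\vee = h^{\vee\vee}$, and combining this with the naturality identity $h^{\vee\vee} \circ \eta_N = \eta_C \circ h$ together with the invertibility of $\eta_C$ produces the factorization
$$h \,=\, \eta_C^{-1} \circ h^{\vee\vee} \circ \eta_N \,=\, (\eta_C^{-1} \circ k^\vee) \circ \varphi,$$
which is exactly the required factorization of $h$ through $\varphi$.

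The substantive input is the existence of the initial $\MCM$-precover $g$, which is supplied by Auslander--Buchweitz and does not require Theorem~B; the rest is a formal duality manipulation, so I do not anticipate a serious obstacle beyond careful bookkeeping with the naturality square for $\eta$ and with the fact that the duality does restrict to $\MCM$. Theorem~B appears not to be needed for Theorem~C itself; presumably it is invoked for other applications in \secref{rha}. As an alternative route one could try to construct a $\wbbCM$-preenvelope of $N$ first, then use Theorem~B to write its target as $\varinjlim M_i$ with $M_i \in \MCM$ and extract an $\MCM$-preenvelope from the fact that the finitely presented $N$ must factor through some $M_{i_0}$; but the duality route above looks both cleaner and more elementary.
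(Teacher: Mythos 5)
Your argument is correct, and it is a genuinely different proof from the paper's. The paper deduces Theorem~C from a theorem of Crawley--Boevey, which (over a noetherian ring) characterizes preenveloping classes of finitely generated modules as those whose direct limit closure is closed under products; it then uses Theorem~B to identify $\varinjlim(\MCM)$ with $\wbbCM$, and \prpref{sop} together with a standard Tor-and-products argument to verify the product closure. Your proof bypasses all of this: it uses only the canonical-module duality $(-)^\vee = \Hom{-}{\dual}$, which restricts to an exact self-duality of $\MCM$ with $\eta_C\colon C \to C^{\vee\vee}$ an isomorphism for $C\in\MCM$, and the Auslander--Buchweitz existence of $\MCM$-precovers; the preenvelope of $N$ is manufactured by precovering $N^\vee$ and dualizing back. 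The bookkeeping with the naturality square $\eta_C \circ h = h^{\vee\vee}\circ\eta_N$ is exactly right, and the factorization $h = (\eta_C^{-1}\circ k^\vee)\circ\varphi$ closes the argument. Your route is more elementary and self-contained (no purity, no direct limit closures, no Crawley--Boevey), and it cleanly isolates the genuinely classical inputs. What the paper's route buys instead is uniformity: the same Lenzing/Crawley--Boevey circle of ideas also underlies Theorem~D, and the proof of C is one more illustration of the paper's main theme that $\varinjlim(\MCM)=\wbbCM$. Your duality argument, by contrast, is intrinsically tied to finitely generated modules (since $\eta_C$ is an isomorphism only for finitely generated maximal CM $C$) and would not extend to the ``big'' setting of Theorem~D. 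You are also right that Theorem~B is not logically needed for Theorem~C when one argues via duality.
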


We also extend the cotorsion pair $(\MCM,\MCM^\perp)$ and the existence of $\MCM$-preenve\-lopes to the realm of all---not necessarily finitely generated---modules:

\begin{introthm*}[D]
  On the category of all $R$-modules, $(\wbbCM,\wbbCM^\perp)$ is a perfect hereditary cotorsion pair, in particular, every $R$-module has a $\wbbCM$-cover and a $\wbbCM^\perp$-envelope. Furthermore, every $R$-module has a $\wbbCM$-preenvelope. 
\end{introthm*}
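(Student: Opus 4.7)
The strategy is to reduce everything to known facts about Gorenstein flat modules by using the identification of $\wbbCM$ with the class of Gorenstein flat $\te$-modules supplied by Theorem~B\,\eqclbl{iii}. Since $\dual$ is finitely generated over the Noetherian ring $R$, the trivial extension $\te$ is itself Noetherian, so the existing machinery---due to Enochs--Jenda--L\'opez-Ramos in restricted settings and, in full generality, to \v{S}aroch--\v{S}\v{t}ov\'\i\v{c}ek---ensures that over any ring the Gorenstein flat modules form the left-hand side of a perfect hereditary complete cotorsion pair. Transferred across Theorem~B\,\eqclbl{iii}, this gives the claimed pair $(\wbbCM,\wbbCM^\perp)$ together with the $\wbbCM$-covers and the $\wbbCM^\perp$-envelopes.

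For the $\wbbCM$-preenvelope part I would work directly from characterization~\eqclbl{ii} of Theorem~B. Noetherianness of $R$ forces $(x_1,\dots,x_{i-1})\prod_{j} M_j = \prod_{j}(x_1,\dots,x_{i-1})M_j$ for any family $(M_j)_{j\in J}$, so the quotient $\bigl(\prod_{j} M_j\bigr)/(x_1,\dots,x_{i-1})\bigl(\prod_{j} M_j\bigr)$ is isomorphic to $\prod_{j} M_j/(x_1,\dots,x_{i-1})M_j$. Induction on $i$ then shows that any system of parameters of $R$ is weak regular on $\prod_{j} M_j$ whenever it is weak regular on each factor $M_j$, so $\wbbCM$ is closed under arbitrary direct products. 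Combined with closure under direct limits (from characterization~\eqclbl{i}) and under retracts, this positions $\wbbCM$ as a candidate for a reflective subclass of $R$-modules, and I would conclude existence of $\wbbCM$-preenvelopes via a solution-set argument in the style of Rada--Saor\'\i n, forming the preenvelope as the product of all representatives of the solution set.

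The main obstacle is supplying that solution set. For a given homomorphism $\varphi\colon M\to X$ with $X\in\wbbCM$, the image $\varphi(M)$ need not itself lie in $\wbbCM$, so one cannot simply restrict $X$ to it. The remedy is to thicken $\varphi(M)$ inside $X$ to a $\wbbCM$-submodule $X'\subseteq X$ whose cardinality is bounded by a fixed function of $|M|$, $|R|$, and $\aleph_0$; the isomorphism classes of such bounded-cardinality $\wbbCM$-modules then form a genuine set. The required cardinal bound is supplied by the deconstructibility of Gorenstein flat $\te$-modules, which is precisely what underpins the perfect cotorsion-pair input cited in the first paragraph, so this obstacle is the technical heart of the argument rather than a conceptual barrier.
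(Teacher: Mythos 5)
Your first paragraph contains a genuine gap. Theorem~B\,\eqclbl{iii} identifies $\wbbCM$ with those $R$-modules that \emph{become} Gorenstein flat when restricted along the surjection $\te\twoheadrightarrow R$; it does \emph{not} give an equivalence between $\mathsf{Mod}(R)$ and $\mathsf{Mod}(\te)$, and the Gorenstein flat cotorsion pair over $\te$ does not restrict to a cotorsion pair over $R$. Two things go wrong. First, for $R$-modules $M$ and $N$ viewed as $\te$-modules, the groups $\Ext[\te]{1}{M}{N}$ and $\Ext[R]{1}{M}{N}$ are in general different (there is a change-of-rings spectral sequence, not an isomorphism), so $\wbbCM^{\perp}$ computed inside $\mathsf{Mod}(R)$ is not read off from the right-hand class of the $\te$-cotorsion pair. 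Second, the Gorenstein flat cover of an $R$-module in $\mathsf{Mod}(\te)$ need not be an $R$-module at all---the ideal $0\ltimes\dual$ can act nontrivially on it---so even granting the $\te$-side covers you do not obtain $\wbbCM$-covers in $\mathsf{Mod}(R)$. Thus ``transferred across Theorem~B\,\eqclbl{iii}'' is exactly the step that needs, and lacks, an argument.

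The paper takes a different route that sidesteps this entirely: it never leaves $\mathsf{Mod}(R)$. \prpref{sop} shows $\wbbCM$ is resolving and closed under products, coproducts, and direct summands; the identity $\wbbCM=\varinjlim(\MCM)$ from \eqref{lim} together with Lenzing \cite[Prop.~2.2]{HLn83} gives closure under pure submodules and pure quotients; the Holm--J{\o}rgensen criterion \cite[Thm.~3.4]{HJ08} then yields a perfect cotorsion pair (hereditary because $\wbbCM$ is resolving); and Rada--Saor{\'\i}n \cite[Cor.~3.5(c)]{RadaSaorin} gives the preenvelopes. Your observation that $\wbbCM$ is closed under products is correct (the elementwise argument is fine, though \prpref{sop} together with \cite[Thm.~3.2.26]{rha} gives it more cleanly), and the idea to invoke Rada--Saor{\'\i}n for preenvelopes is the right one. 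But the load-bearing ingredient for the perfect cotorsion pair is the purity closure of $\wbbCM$ coming from Theorem~B\,\eqclbl{i}, not the Gorenstein flat theory over $\te$; the latter is used in the paper only as an intermediate device inside the proof of Theorem~B itself.
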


As a consequence of Theorem~D, we prove the existence of (non-weak) balanced big~CM covers for certain types of modules; see \prpref{bbCM-cover} and \exaref{bbCM-cover}.

\section{Regular sequences, depth, and CM modules}
\label{sec:depth}

\begin{stp}
  \label{stp:setup}
  Throughout, $(R,\mathfrak{m},k)$ is a commutative noetherian local CM ring with Krull dimension $d$. It is assumed that $R$ has a dualizing (or canonical) module $\dual$. 
\end{stp}

Let $(A,\n,\ell)$ be any commutative noetherian local ring and let $M$ be any $A$-module. A sequence of elements $\bar{x}=x_1,\ldots,x_n \in \n$ is called a \emph{weak $M$-regular sequence} if $x_i$ is a non-zerodivisor on $M/(x_1,\ldots,x_{i-1})M$ for every $1 \leqslant i \leqslant n$ (for $i=1$ this means that $x_1$ is a non-zerodivisor on $M$). If, in addition, $(x_1,\ldots,x_n)M \neq M$, then $\bar{x}$ is an \emph{$M$-regular sequence}. If $M \neq 0$ is finitely generated, then by Nakayama's lemma every weak $M$-regular sequence is automatically $M$-regular; this is not the case in general. 

The \emph{depth} of a finitely generated $A$-module $M \neq 0$, denoted by $\depth[A]{M}$, is the supremum of the lengths of all $M$-regular sequences (alternatively, the
common length of all maximal $M$-regular sequences). This invariant can be computed homologically as follows:
\begin{equation}
  \label{eq:depth}
  \depth[A]{M} \,=\, \inf\{\,i \in \mathbb{Z} \,|\, \Ext[A]{i}{\ell}{M} \neq 0\,\}\;.
\end{equation}
For an arbitrary $A$-module $M$, we \emph{define} its depth\footnote{\ Some authors refer to the number in \eqref{depth} as the ``Ext-depth'' of $M$. If $\n M \neq M$, then
$\depth[A]{M}$ (i.e.~the ``Ext-depth'' of $M$) is always an upper bound for the length of any $M$-regular sequence; see Strooker \cite[Prop.~5.3.7(ii)]{Strooker}. However, if $M$ is not finitely generated, then there does not necessarily exist an $M$-regular sequence of length $\depth[A]{M}$; see p.~91 in \emph{loc.~cit.}~for a counterexample.} by the formula \eqref{depth}, with the convention that $\inf\,\emptyset = \infty$. So, for example, the zero module has infinite depth\footnote{\ It is also possible for a non-zero module to have infinite depth; see \obsref{Griffith2}.}.

For a finitely generated $A$-module $M \neq 0$ one always has $\depth[A]{M} \leqslant \dim[A]$, and the following conditions are equivalent; see
Eisenbud \cite[Prop.-Def.~21.9]{Eis}.
\begin{eqc}
\item $\depth{M}=\dim[A]$.
\item Every system of parameters of $A$ is an $M$-regular sequence.
\item Some system of parameters of $A$ is an $M$-regular sequence.
\end{eqc}
A finitely generated module $M$ that satisfies these equivalent conditions is called \emph{small CM}. A \emph{finitely generated maximal CM module} is a module which is either small CM or zero, and the category of all such modules is denoted by $\MCM$. Unlike the category of small CM modules, the category $\MCM$ additive and closed under direct summands. Some authors, such as Yoshino \cite{Yos}, use the simpler terminology ``CM module'' for what we have called a ``maximal CM module''.

It is well-known that for an arbitrary $A$-module $M$, the conditions \eqclbl{i}--\eqclbl{iii} above are no longer equivalent, and hence there is more than one way to extend the notion of ``(maximal) CM'' to the realm of non-finitely generated modules. The next definition is due to Hochster \cite{Hochster} (the term ``balanced'' appears in Sharp \cite{Sharp81}).

\begin{dfn}
  \label{dfn:bigCM}
An $A$-module $M$ is called \emph{(balanced) big CM} if (every) some system of parameters of $A$ is an $M$-regular sequence.
\end{dfn}

It is well-known that a big CM module need not be balanced; cf.~\exaref{Griffith1}. As noted above, a finitely generated module is big CM if and only if it is balanced big CM.

\begin{lem}
  \label{lem:Tor}
  Let $M$ be any (not necessarily finitely generated) $R$-module and assume that $\bar{x}=x_1,\ldots,x_n$ is both an $R$-regular and a weak $M$-regular sequence. Then one has
\begin{displaymath}
  \Tor{i}{R/(\bar{x})}{M}=0 \quad \text{ for all } \quad i>0\;.
\end{displaymath}
\end{lem}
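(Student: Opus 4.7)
The plan is to turn the Tor-computation into a Koszul-homology computation, and then verify the Koszul-homology vanishing by a short induction on $n$.

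Since $\bar{x}$ is an $R$-regular sequence, the Koszul complex $\Koszul{R}{\bar{x}}$ is acyclic in positive degrees and therefore a finite free resolution of $R/(\bar{x})$ (a standard consequence of the $R$-regularity, e.g.\ via the depth-sensitivity of Koszul homology, or just the classical computation $H_{>0}(\Koszul{R}{\bar{x}})=0$). Tensoring with $M$ thus gives
\begin{equation*}
  \Tor{i}{R/(\bar{x})}{M} \,\cong\, H_i(\Koszul{R}{\bar{x}}\otimes_R M) \,=\, H_i(\Koszul{R}{\bar{x};M})\;,
\end{equation*}
so it suffices to prove that $H_i(\Koszul{R}{\bar{x};M})=0$ for every $i>0$, using only that $\bar{x}$ is a \emph{weak} $M$-regular sequence (that is, without invoking $(\bar{x})M\neq M$).

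I would prove this by induction on $n$. The base case $n=1$ is immediate: the complex $\Koszul{R}{x_1;M}$ is $0\to M\xrightarrow{x_1} M\to 0$, and $H_1$ equals the $x_1$-torsion of $M$, which vanishes because $x_1$ is a non-zerodivisor on $M$. For the induction step, write $\bar{x}'=x_1,\ldots,x_{n-1}$ and use the standard mapping-cone short exact sequence of complexes
\begin{equation*}
  0 \,\longrightarrow\, \Koszul{R}{\bar{x}';M} \,\longrightarrow\, \Koszul{R}{\bar{x};M} \,\longrightarrow\, \Koszul{R}{\bar{x}';M}[-1] \,\longrightarrow\, 0\;,
\end{equation*}
whose associated long exact sequence has connecting homomorphism equal (up to sign) to multiplication by $x_n$ on each $H_j(\Koszul{R}{\bar{x}';M})$. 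By the inductive hypothesis these groups vanish for $j>0$, and $H_0(\Koszul{R}{\bar{x}';M})=M/(\bar{x}')M$.

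For $i\geqslant 2$ both neighbouring terms in the long exact sequence vanish, forcing $H_i(\Koszul{R}{\bar{x};M})=0$. For $i=1$ the long exact sequence yields an injection
\begin{equation*}
  H_1(\Koszul{R}{\bar{x};M}) \,\hookrightarrow\, \ker\bigl(\pm x_n\colon M/(\bar{x}')M \to M/(\bar{x}')M\bigr)\;,
\end{equation*}
and the kernel is zero because $x_n$ is a non-zerodivisor on $M/(\bar{x}')M$ by hypothesis. This finishes the induction and hence the lemma. There is no serious obstacle; the only point to watch is that every step of the induction relies solely on the non-zerodivisor conditions, so the argument never needs the strict regularity condition $(\bar{x})M\neq M$—which is exactly why ``weak $M$-regular'' suffices.
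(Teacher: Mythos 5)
Your proof is correct, and it takes a somewhat different route than the paper's. The paper also argues by induction on $n$, but its inductive step passes to the quotient ring $\bar{R}=R/(x_1,\dots,x_{n-1})$ and uses associativity of the derived tensor product to establish a base-change isomorphism $\Tor{i}{R/(\bar{x})}{M}\cong\Tor[\bar{R}]{i}{\bar{R}/(\bar{x}_n)}{\bar{M}}$, then invokes the $n=1$ case over $\bar{R}$. You instead replace the Tor computation by Koszul homology at the outset (using $R$-regularity of $\bar{x}$ to know that $\Koszul{R}{\bar{x}}$ is a free resolution of $R/(\bar{x})$), and then run the induction entirely at the level of the mapping-cone short exact sequence for Koszul complexes, with the connecting map $\pm x_n$ killed by the non-zerodivisor hypothesis. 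Your version is more elementary in that it avoids the derived category and the base-change isomorphism; the paper's version is slightly cleaner conceptually in that it literally reduces the statement over $R$ to the statement over $\bar{R}$. Both, as you note, never need $(\bar{x})M\neq M$, only the non-zerodivisor conditions, which is exactly the point of allowing weak $M$-regular sequences.
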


\begin{proof}
  By induction on $n$. For $n=1$ we have a single element $x_1$ which is a non-zerodivisor on both $R$ and $M$. The assertion follows from inspection of the long exact Tor-sequence that arises from application of $-\otimes_RM$ to the short exact sequence \smash{$0 \to R \stackrel{x_1}{\to} R \to R/(x_1) \to 0$}.

Next we assume that $n>0$. Consider the ring $\bar{R} = R/(x_1,\ldots,x_{n-1})$ and the $\bar{R}$-module $\bar{M} = \bar{R} \otimes_R M = M/(x_1,\ldots,x_{n-1})M$. By the induction hypothesis, 
\begin{displaymath}
  \Tor{i}{\bar{R}}{M} \,=\, \Tor{i}{R/(x_1,\ldots,x_{n-1})}{M} \,=\, 0 \quad \text{ for all } \quad i>0\;.
\end{displaymath}
Thus, in the derived category over $R$, one has $\Ltp{\bar{R}}{M} \cong \bar{R} \otimes_R M = \bar{M}$, and consequently
\begin{displaymath}
  \Ltp{R/(x_1,\ldots,x_n)}{M}
  \,=\, 
  \Ltp{\bar{R}/(\bar{x}_n)}{M}
  \,\cong\, 
  \Ltp[\bar{R}]{\bar{R}/(\bar{x}_n)}{(\Ltp{\bar{R}}{M})} 
  \,\cong\, 
  \Ltp[\bar{R}]{\bar{R}/(\bar{x}_n)}{\bar{M}}\;.
\end{displaymath}
In particular, $\Tor{i}{R/(x_1,\ldots,x_n)}{M} \cong \Tor[\bar{R}]{i}{\bar{R}/(\bar{x}_n)}{\bar{M}}$ for every $i>0$. The latter Tor is $0$; this follows from the induction start 
as $\bar{x}_n \in \bar{R}$ is a non-zerodivisor on both $\bar{R}$ and $\bar{M}$.
\end{proof}

\begin{prp}
  \label{prp:sop}
  For every (not necessarily finitely generated) $R$-module $M$, the following conditions are equivalent:
\begin{eqc}
\item Every system of parameters of $R$ is a weak $M$-regular sequence.
\item For every $R$-regular sequence $\bar{x}$ one has $\Tor{i}{R/(\bar{x})}{M}=0$ for all $i>0$.
\item For every $R$-regular sequence $\bar{x}$ one has $\Tor{1}{R/(\bar{x})}{M}=0$.
\end{eqc}
\end{prp}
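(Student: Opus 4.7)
The plan is to prove the cyclic implications \eqclbl{ii}$\Rightarrow$\eqclbl{iii}$\Rightarrow$\eqclbl{i}$\Rightarrow$\eqclbl{ii}. The implication \eqclbl{ii}$\Rightarrow$\eqclbl{iii} is immediate. For \eqclbl{i}$\Rightarrow$\eqclbl{ii}, I would use the CM hypothesis on $R$: every $R$-regular sequence $\bar{x}=x_1,\ldots,x_n$ is part of a system of parameters of $R$, since in a CM ring regular sequences are exactly subsequences of s.o.p.'s. By \eqclbl{i}, this extended s.o.p.\ is a weak $M$-regular sequence, and any initial segment of a weak regular sequence is trivially weak regular, so $\bar{x}$ is weak $M$-regular. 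Since $\bar{x}$ is simultaneously $R$-regular and weak $M$-regular, \lemref{Tor} gives $\Tor{i}{R/(\bar{x})}{M}=0$ for all $i>0$.

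The substantive step is \eqclbl{iii}$\Rightarrow$\eqclbl{i}. Let $\bar{y}=y_1,\ldots,y_d$ be a s.o.p.\ of $R$; since $R$ is CM, $\bar{y}$ is $R$-regular. I would show by induction on $i\in\{1,\ldots,d\}$ that $y_i$ is a non-zerodivisor on $M/(y_1,\ldots,y_{i-1})M$. The base case $i=1$: the sequence $y_1$ is $R$-regular, so \eqclbl{iii} yields $\Tor{1}{R/(y_1)}{M}=0$, and applying $-\otimes_R M$ to the short exact sequence $0\to R\xrightarrow{y_1}R\to R/(y_1)\to 0$ shows that multiplication by $y_1$ on $M$ is injective.

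For the inductive step, set $\bar{R}=R/(y_1,\ldots,y_{i-1})$ and $\bar{M}=M/(y_1,\ldots,y_{i-1})M$. By the induction hypothesis, $y_1,\ldots,y_{i-1}$ is both $R$-regular and weak $M$-regular, so \lemref{Tor} gives $\Tor{j}{\bar{R}}{M}=0$ for all $j>0$, whence $\Ltp{\bar{R}}{M}\cong \bar{M}$ in the derived category over $R$. Consequently (exactly as in the proof of \lemref{Tor}),
\begin{displaymath}
\Ltp{R/(y_1,\ldots,y_i)}{M}\;\cong\;\Ltp[\bar{R}]{\bar{R}/(\bar{y}_i)}{\bar{M}}\;,
\end{displaymath}
so that $\Tor[\bar{R}]{1}{\bar{R}/(\bar{y}_i)}{\bar{M}}\cong \Tor{1}{R/(y_1,\ldots,y_i)}{M}$, and the latter vanishes by \eqclbl{iii} since $y_1,\ldots,y_i$ is $R$-regular. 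Because $\bar{y}_i$ is a non-zerodivisor in $\bar{R}$ (as $\bar{y}$ is $R$-regular), applying $-\otimes_{\bar{R}}\bar{M}$ to $0\to\bar{R}\xrightarrow{\bar{y}_i}\bar{R}\to\bar{R}/(\bar{y}_i)\to 0$ and using the vanishing of the above Tor shows that $\bar{y}_i$ acts injectively on $\bar{M}$, as required.

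The main obstacle is really just keeping track of the change-of-rings gymnastics in the inductive step, but the derived-category isomorphism already established in the proof of \lemref{Tor} handles it cleanly. I expect no further difficulties.
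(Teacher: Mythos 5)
Your argument is correct, and the implications \eqclbl{ii}$\Rightarrow$\eqclbl{iii} and \eqclbl{i}$\Rightarrow$\eqclbl{ii} match the paper's proof exactly. Where you diverge is \eqclbl{iii}$\Rightarrow$\eqclbl{i}. You set up an induction, re-invoke \lemref{Tor} to get $\operatorname{Tor}^R_{j}(\bar{R},M)=0$, pass to the derived category to rewrite $\Ltp{R/(y_1,\ldots,y_i)}{M}$ as $\Ltp[\bar{R}]{\bar{R}/(\bar{y}_i)}{\bar{M}}$, and then tensor the Koszul-type short exact sequence over $\bar{R}$. This works, but it duplicates the mechanism of \lemref{Tor} and carries overhead you don't need. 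The paper's proof is shorter and requires no induction, no change of rings, and no derived-category base change: because $\bar{x}$ is $R$-regular, for each $i$ there is already a short exact sequence of $R$-modules
\begin{displaymath}
0 \longrightarrow R/(x_1,\ldots,x_{i-1}) \stackrel{x_i}{\longrightarrow} R/(x_1,\ldots,x_{i-1}) \longrightarrow R/(x_1,\ldots,x_i) \longrightarrow 0\,,
\end{displaymath}
and applying $-\otimes_R M$ directly yields $\Tor{1}{R/(x_1,\ldots,x_i)}{M} \to M/(x_1,\ldots,x_{i-1})M \stackrel{x_i}{\to} M/(x_1,\ldots,x_{i-1})M$. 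Hypothesis \eqclbl{iii} kills the Tor term, and injectivity of $x_i$ on $M/(x_1,\ldots,x_{i-1})M$ falls out immediately, with no inductive hypothesis used at all. Your approach buys nothing extra here (the change-of-rings isomorphism is the same one already packaged in \lemref{Tor}); the paper's buys brevity and independence from the earlier lemma.
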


\begin{proof}
\proofofimp{i}{ii} Let $\bar{x}$ be any $R$-regular sequence. As $R$ is CM, $\bar{x}$ is part of a s.o.p.~of $R$; see \cite[Thm.~2.1.2(d)]{BruHer}. By the assumption \eqclbl{i}, this s.o.p.~is a weak $M$-regular sequence, and hence so is the subsequence $\bar{x}$. \lemref{Tor} gives the desired conclusion.

\proofofimp{ii}{iii} Clear.

\proofofimp{iii}{i} Let $\bar{x}=x_1,\ldots,x_d$ be any s.o.p.~of $R$. Since $R$ is CM, the sequence $\bar{x}$ is $R$-regular; see \secref{depth}. Thus, for every $i=1,\ldots,d$ there is an exact sequence,
\begin{displaymath}
  0 \longrightarrow R/(x_1,\ldots,x_{i-1}) \stackrel{x_i}{\longrightarrow} R/(x_1,\ldots,x_{i-1}) \longrightarrow R/(x_1,\ldots,x_i) \longrightarrow 0\;.
\end{displaymath}
Application of the functor $-\otimes_RM$ to this sequence yields the long exact sequence
\begin{displaymath}
  \Tor{1}{R/(x_1,\ldots,x_i)}{M} \longrightarrow M/(x_1,\ldots,x_{i-1})M \stackrel{x_i}{\longrightarrow} M/(x_1,\ldots,x_{i-1})M 
\end{displaymath}
The sequence $x_1,\ldots,x_i$ is $R$-regular, as it is a subsequence of the $R$-regular sequence $\bar{x}$. Hence $\Tor{1}{R/(x_1,\ldots,x_i)}{M}=0$ by the assumption \eqclbl{iii}, and thus $x_i$ is a non-zerodivisor on $M/(x_1,\ldots,x_{i-1})M$. Therefore $\bar{x}$ is a weak $M$-regular sequence.

\end{proof}

\section{The trivial extension}

Let $A$ be any commutative ring and let $C$ be any $A$-module. The \emph{trivial extension} of $A$ by $C$ (also called the \emph{idealization} of $C$ in $A$) is the ring $A \ltimes C$ whose underlying abelian group is $A \oplus C$ and where multiplication is given by
\begin{displaymath}
 (a,c)(a',c') \,=\, (aa',ac'+a'c) \quad \text{ for } \quad (a,c),(a',c') \in A \oplus C\;.
\end{displaymath}
We refer to Fossum, Griffith, and Reiten \cite[\S 5]{FGR-75} for basic properties of this construction. The ring homomorphisms $\varphi \colon A \to A \ltimes C$ given by $a \mapsto (a,0)$, and $\psi \colon A \ltimes C \to A$ given by $(a,c) \mapsto a$ allow us to consider any $A$-module as an $(A \ltimes C)$-module, and vice versa, and we shall do so freely. Note that the composition $\psi\varphi$ is the identity on $A$.

\begin{lem}
  \label{lem:te-1}
  Let $A$ be any commutative ring. For any $A$-module $C$ and any set of elements $x_1,\ldots,x_n$ in $A$ there is an isomorphism of rings,
  \begin{displaymath}
    (A \ltimes C)/((x_1,0),\ldots,(x_n,0)) \ \cong \ A/(x_1,\ldots,x_n) \ltimes C/(x_1,\ldots,x_n)C\;.
  \end{displaymath}
\end{lem}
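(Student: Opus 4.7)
The plan is to exhibit an explicit surjective ring homomorphism
\[
\Phi\colon A \ltimes C \,\longrightarrow\, A/(x_1,\ldots,x_n) \ltimes C/(x_1,\ldots,x_n)C
\]
defined on elements by $\Phi(a,c) = (\bar{a},\bar{c})$, where the bars denote residue classes modulo $(x_1,\ldots,x_n)$ and $(x_1,\ldots,x_n)C$, respectively. I would first verify that $\Phi$ is a ring homomorphism by a direct computation: applying the multiplication rule of the trivial extension on both sides, one sees that $\Phi((a,c)(a',c')) = (\overline{aa'},\overline{ac'+a'c})$ agrees with $(\bar{a},\bar{c})(\bar{a'},\bar{c'})$, and additivity is immediate. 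Surjectivity is obvious.

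The real content is then identifying the kernel of $\Phi$ with the ideal $I = ((x_1,0),\ldots,(x_n,0))$ of $A \ltimes C$. Clearly $\ker\Phi = (x_1,\ldots,x_n) \oplus (x_1,\ldots,x_n)C$ as an additive subgroup. On the other hand, a general element of $I$ has the form $\sum_{i=1}^n (a_i,c_i)(x_i,0) = \bigl(\sum_i a_ix_i,\sum_i x_ic_i\bigr)$, so $I$ is exactly the additive subgroup $(x_1,\ldots,x_n) \oplus (x_1,\ldots,x_n)C$ as well. Hence $\ker\Phi = I$, and the first isomorphism theorem gives the desired ring isomorphism.

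No serious obstacle is expected; the only thing to be careful about is the computation of $I$, where one must check that the component in $C$ really is the submodule $(x_1,\ldots,x_n)C$ (and not something larger coming from the mixed terms in the multiplication rule). Because the generators $(x_i,0)$ have zero $C$-component, the multiplication $(a_i,c_i)(x_i,0) = (a_ix_i,x_ic_i)$ produces no contribution from $c_i \cdot 0$, so the $C$-component is precisely $\sum_i x_ic_i \in (x_1,\ldots,x_n)C$, as required.
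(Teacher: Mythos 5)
Your proof is correct and matches the paper's argument essentially verbatim: both define the same residue map $(a,c)\mapsto(\bar a,\bar c)$, note surjectivity, and identify the kernel with the ideal $((x_1,0),\ldots,(x_n,0))$ by the same direct computation $\sum_i(a_i,c_i)(x_i,0)=(\sum_i a_ix_i,\sum_i x_ic_i)$. The only cosmetic difference is that you phrase the kernel identification as two subgroups coinciding, whereas the paper checks the two inclusions $\Ker\varphi\supseteq I$ and $\Ker\varphi\subseteq I$ directly.
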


\begin{proof}
  There is a surjective homomorphism $\varphi \colon A \ltimes C \twoheadrightarrow A/(x_1,\ldots,x_n) \ltimes C/(x_1,\ldots,x_n)C$ given by $(a,c) \mapsto ([a]_{(x_1,\ldots,x_n)},[c]_{(x_1,\ldots,x_n)C})$. Clearly, we have $(x_i,0) \in \Ker{\varphi}$. Conversely, if $(a,c) \in \Ker{\varphi}$, then $a=\sum_{i=1}^nx_ia_i$ and $c=\sum_{i=1}^nx_ic_i$ where $a_i \in A$ and $c_i \in C$. It fol\-lows that $(a,c) = \sum_{i=1}^n(x_i,0)(a_i,c_i)$, so $(a,c)$ is in the ideal $((x_1,0),\ldots,(x_n,0))$ in $A \ltimes C$.
\end{proof}

While the previous lemma was quite general, the next one is more specific.

\begin{lem}
  \label{lem:te-2}
  Let $C \neq 0$ be any finitey generated maximal CM $R$-module. If $x_1,\ldots,x_n \in \m$ is an $R$-regular sequence, then $(x_1,0),\ldots,(x_n,0)$ is an $(R \ltimes C)$-regular sequence.
\end{lem}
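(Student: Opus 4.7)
The plan is to proceed by induction on $i$ to show that each $(x_i,0)$ is a non-zerodivisor on the quotient $(R \ltimes C)/((x_1,0),\ldots,(x_{i-1},0))$, and then to invoke Nakayama's lemma to promote the weak regular sequence to a genuine regular one. Note that $R \ltimes C$ is a commutative noetherian local ring with maximal ideal $\m \ltimes C$, since $R$ is local noetherian and $C$ is finitely generated; in particular, the elements $(x_i,0)$ all lie in its maximal ideal.

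The main identification is provided by \lemref{te-1}, which gives
\begin{displaymath}
  (R \ltimes C)/((x_1,0),\ldots,(x_{i-1},0)) \ \cong \ \bar{R} \ltimes \bar{C}\,,
\end{displaymath}
where $\bar{R} = R/(x_1,\ldots,x_{i-1})$ and $\bar{C} = C/(x_1,\ldots,x_{i-1})C$. Under this isomorphism, multiplication by $(x_i,0)$ corresponds to the map $(\bar{a},\bar{c}) \mapsto (x_i\bar{a},x_i\bar{c})$ on $\bar{R} \ltimes \bar{C}$. Hence injectivity of this map is equivalent to $x_i$ being a non-zerodivisor on both $\bar{R}$ and $\bar{C}$ separately.

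The first condition is immediate from the hypothesis that $x_1,\ldots,x_n$ is $R$-regular. For the second, I would use the CM hypothesis: since $R$ is CM, the $R$-regular sequence $x_1,\ldots,x_n$ in $\m$ extends to a full system of parameters of $R$ by \cite[Thm.~2.1.2(d)]{BruHer}. By the characterization of maximal CM modules recalled in \secref{depth}, every s.o.p.~of $R$ is $C$-regular; hence the initial subsequence $x_1,\ldots,x_i$ is $C$-regular, so $x_i$ is indeed a non-zerodivisor on $\bar{C}$. This shows that $(x_1,0),\ldots,(x_n,0)$ is a weak $(R \ltimes C)$-regular sequence.

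Finally, since $R \ltimes C$ is finitely generated over itself and the elements $(x_i,0)$ lie in its maximal ideal, Nakayama's lemma (as recalled after the definition of weak regular sequence in \secref{depth}) automatically upgrades this to an $(R \ltimes C)$-regular sequence. There is no real obstacle here; the only nontrivial input is the standard fact that an $R$-regular sequence in a CM ring extends to a s.o.p.~and is therefore $C$-regular whenever $C$ is maximal CM.
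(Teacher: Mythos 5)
Your proof is correct and follows essentially the same approach as the paper's: both rely on the identification from \lemref{te-1} and the observation that multiplication by $(x_i,0)$ acts componentwise by $x_i$ on $\bar{R} \ltimes \bar{C}$, so that the problem reduces to $x_i$ being a non-zerodivisor on both $\bar{R}$ and $\bar{C}$. Your route to $\bar{C}$-regularity (extend $x_1,\ldots,x_n$ to a s.o.p.\ and use the s.o.p.\ characterization of maximal CM modules) differs only cosmetically from the paper's (observe that $\bar{C}$ is maximal CM over $\bar{R}$, so a non-zerodivisor on $\bar{R}$ is one on $\bar{C}$), and your explicit appeal to Nakayama's lemma to pass from a \emph{weak} $(R \ltimes C)$-regular sequence to a genuine one is a small but welcome addition of rigor that the paper leaves implicit.
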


\begin{proof}
Fix $i \in \{1,\ldots,n\}$. As $C \neq 0$ is a maximal CM $R$-module and the sequence $x_1,\ldots,x_{i-1}$ is $R$-regular, $\bar{C}=C/(x_1,\ldots,x_{i-1})C \neq 0$ is a maximal CM module over $\bar{R}=R/(x_1,\ldots,x_{i-1})$. By assumption, $x_i$ is a non-zerodivisor on $\bar{R}$. We must argue that $(x_i,0)$ is a non-zerodivisor on $(R \ltimes C)/((x_1,0),\ldots,(x_{i-1},0)) \cong \bar{R} \ltimes \bar{C}$, where the isomorphism is by \lemref{te-1}. Thus, let $(r,c) \in \bar{R} \ltimes \bar{C}$ be any element such that $(x_i,0)(r,c) = (x_ir,x_ic)$ is $(0,0)$; that is, we have $x_ir=0$ and $x_ic=0$. By asumption, $x_i$ is a non-zerodivisor on $\bar{R}$, and since the $\bar{R}$-module $\bar{C}$ is maximal CM, the element $x_i$ is also a non-zerodivisor on $\bar{C}$. Thus the equations $x_ir=0$ and $x_ic=0$ imply that $r=0$ and $c=0$, that is, $(r,c)=(0,0)$ as desired.
\end{proof}

Let $A$ be a commutative ring and let $x \in A$ be an element. Recall that the \emph{Koszul complex} on $x$ is the complex \smash{$\Koszul{A}{x} \,=\, 0 \to A \stackrel{x}{\to} A \to 0$} concentrated in homological degrees $0,1$. For a sequence $\bar{x} = x_1,\ldots,x_n \in A$ the Koszul complex is $\Koszul{A}{\bar{x}} \,=\, \Koszul{A}{x_1} \otimes_A \cdots \otimes_A \Koszul{A}{x_n}$.

\begin{lem}
  \label{lem:te-3}
  Let $A$ be any commutative ring, let $C$ be an $A$-module, and let $\bar{x}=x_1,\ldots,x_n$ be a sequence of elements in~$A$. Consider the elements $y_i = (x_i,0)$ and the sequence $\bar{y}=y_1,\ldots,y_n$ in $A \ltimes C$. For every $A$-module $M$ there is the following isomorphism of both $A$- and $(A \ltimes C)$-complexes,
\begin{displaymath} 
  \Koszul{A \ltimes C}{\bar{y}} \otimes_{A \ltimes C} M \,\cong\, \Koszul{A}{\bar{x}} \otimes_A M\,.
\end{displaymath}
\end{lem}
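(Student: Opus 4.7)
My plan is to deduce the stated isomorphism from two standard facts: Koszul complexes are compatible with base change of scalars, and the tensor product is associative. Because the ring map $\varphi\colon A \to A \ltimes C$ sends $x_i$ to $y_i$, and because $\Koszul{A}{\bar{x}}$ is a bounded complex of free $A$-modules, there is a natural isomorphism of $(A \ltimes C)$-complexes
\[
  \Koszul{A \ltimes C}{\bar{y}} \,\cong\, (A \ltimes C) \otimes_A \Koszul{A}{\bar{x}}\,.
\]
For $n=1$ this is immediate, as both sides equal $0 \to A \ltimes C \stackrel{(x_1,0)}{\longrightarrow} A \ltimes C \to 0$; for general $n$ it follows from the product decomposition $\Koszul{A}{\bar{x}} = \Koszul{A}{x_1} \otimes_A \cdots \otimes_A \Koszul{A}{x_n}$ together with the compatibility of $\otimes_A$ with extension of scalars to $A \ltimes C$.

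Next, I would tensor both sides on the right with $M$ over $A \ltimes C$---where $M$ is viewed as an $(A \ltimes C)$-module via the ring map $\psi$ introduced at the start of the section---and apply associativity of the tensor product:
\[
  \Koszul{A \ltimes C}{\bar{y}} \otimes_{A \ltimes C} M
  \,\cong\, [(A \ltimes C) \otimes_A \Koszul{A}{\bar{x}}] \otimes_{A \ltimes C} M
  \,\cong\, \Koszul{A}{\bar{x}} \otimes_A [(A \ltimes C) \otimes_{A \ltimes C} M]
  \,\cong\, \Koszul{A}{\bar{x}} \otimes_A M\,.
\]

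What remains is to verify that this composite respects both the $A$- and the $(A \ltimes C)$-structures, and this is the only place where one might expect difficulty. But the identity $\psi \circ \varphi = \mathrm{id}_A$ observed in the discussion above says that restricting any $(A \ltimes C)$-action obtained via $\psi$ back along $\varphi$ recovers the original $A$-action. Consequently every map appearing in the chain is simultaneously $A$-linear and $(A \ltimes C)$-linear, and the bi-equivariance comes for free. I therefore do not anticipate a genuine obstacle; the argument is essentially formal, with the only subtlety being a careful bookkeeping of the two module structures.
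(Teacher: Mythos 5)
Your argument is correct and rests on the same underlying observation as the paper's proof, namely that $\psi\varphi = \mathrm{id}_A$ forces the $A$-action obtained by restricting the $(A\ltimes C)$-action on $M$ along $\varphi$ to be the original one. The paper just reduces to $n=1$, writes both sides explicitly as $0 \to M \to M \to 0$ and observes they are literally the same complex because $y_1 = (x_1,0)$ acts as $x_1$; you instead pass through the base-change isomorphism $\Koszul{A\ltimes C}{\bar y} \cong (A\ltimes C)\otimes_A \Koszul{A}{\bar x}$ and then cancel via associativity. Both are sound; yours is a bit more machinery for the same payoff, while the paper's is more concrete. One very small point worth spelling out if you wrote this up: in the middle isomorphism you should say explicitly that $(A\ltimes C)\otimes_{A\ltimes C} M \cong M$ is being read as an $A$-module by restriction along $\varphi$, and that this coincides with the given $A$-structure on $M$ precisely because $\psi\varphi=\mathrm{id}_A$ — this is the only non-formal step, and it is exactly the content of the paper's final sentence.
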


\begin{proof}
  It suffices to consider the case $n=1$. We have
\begin{align*}
  \Koszul{A \ltimes C}{y_1} \otimes_{A \ltimes C} M &\ \cong \
  0 \longrightarrow M \stackrel{y_1}{\longrightarrow} M \longrightarrow 0
  \qquad \text{($M$ is viewed as an $(A \ltimes C)$-module)}
  \\
  \Koszul{A}{x_1} \otimes_A M &\ \cong \
  0 \longrightarrow M \stackrel{x_1}{\longrightarrow} M \longrightarrow 0
  \qquad \text{($M$ is viewed as an $A$-module)}
\end{align*}
By definition of the $(A \ltimes C)$-action on $M$, multiplication by the element $y_1=(x_1,0)$ on $M$ is nothing but multiplication by $x_1$ on $M$.
\end{proof}

\section{The direct limit closure of maximal CM modules}
\label{sec:direct-limit-closure}

By a \emph{filtered colimit} of maximal CM $R$-modules we mean the colimit of a functor $F$ from a skeletally small filtered category $\mathcal{J}$ to the category of $R$-modules such that $F(J)$ is maximal CM for every $J$ in $\mathcal{J}$. We reserve the term \emph{direct limit} for the special situation where $\mathcal{J}$ is the filtered category associated to a
\emph{directed set}, i.e.~a filtered preordered set. 

\begin{rmk}
\label{rmk:filtered-colimit}
It follows from general principles that a module is a filtered colimit of maximal CM modules if and only if it is a direct limit of maximal CM modules; see Ad{\'a}mek and Rosick{\'y} \cite[Thm.~1.5]{AdamekRosicky}. Thus in Theorem~B condition \eqclbl{i}, one can freely replace ``direct limit'' with ``filtered colimit''.
\end{rmk}

In Theorem~B condition \eqclbl{iii}, we encounter the notion of Gorenstein flat modules. These modules were defined by Enochs, Jenda, and Torrecillas \cite{EJT-93} as follows:

\begin{dfn}
 \label{dfn:G-flat}
 Let $A$ be any commutative ring. An $A$-module $M$ is called \emph{Gorenstein flat} if there exists an exact sequence of flat $A$-modules, $\mathbb{S} = \cdots \to F_1 \to F_0 \to F_{-1} \to \cdots$, with the property that $E \otimes_A \mathbb{S}$ is exact for every injective $A$-module $E$, such that $M \cong \Im{(F_0 \to F_{-1})}$.
\end{dfn}

We are now in a position to prove our main result: Theorem~B from the Introduction.

\begin{proof}[Proof of Theorem~B]
\proofofimp{i}{ii} Every finitely generated maximal CM $R$-module satisfies condition \eqclbl{ii}; see \secref{depth}. And \prpref{sop} shows that the class of $R$-modules that satisfy condition \eqclbl{ii} is closed under direct limits.

\proofofimp{ii}{iii} As $\dual$ is a dualizing $R$-module, $\te$ is a Gorenstein ring by \cite[Thm.~5.6]{FGR-75}. So at least the $(\te)$-module $M$ has finite Gorenstein flat dimension; see \cite[Cor.~2.4]{EJX-96b} or \cite[Thm.~5.2.10]{lnm}. It therefore follows from \cite[Cor.~5.4.9]{lnm} that this dimension, $\operatorname{Gfd}_{\te} M$, can be computed by the ``Chouinard formula'':
\begin{displaymath}
  \operatorname{Gfd}_{\te} M \,=\, \sup\{\,\depthR[(\te)_\q] - \depth[(\te)_\q]{M_\q} \ | \ \q \in \operatorname{Spec}(\te) \,\}\;.
\end{displaymath}
Thus, to prove that $M$ is Gorenstein flat over $\te$, equivalently, that $\operatorname{Gfd}_{\te} M \leqslant 0$ (the Gorenstein flat dimension of the zero module is $-\infty$), we must argue that the inequality
\begin{equation}
  \label{eq:depth-leq-zero}
  \depthR[(\te)_\q] - \depth[(\te)_\q]{M_\q} \,\leqslant\, 0
\end{equation}
holds for every prime ideal $\q$ in $\te$. By \cite[Lem.~5.1(i)]{FGR-75} every such $\q$ has the form $\q = \p \ltimes \dual$ for a prime ideal $\p$ in $R$. The rings $R$ and $\te$ are CM, and hence so are their localizations $R_\p$ and $(\te)_\q$. By Lem.~5.1(ii) in \emph{loc.~cit.}~the rings $R_\p$ and $(\te)_\q$ have the same Krull dimension, and this number we denote by $e$.

Recall that all maximal $R$-regular sequences contained in $\p$ have the same length; this number is called the \emph{grade of $\p$ on $R$} and it is denoted by $\operatorname{grade}_R{(\p,R)}$. Since $R$ is CM, we have $\operatorname{grade}_R{(\p,R)} = \depthR[R_\p] = e$ by \cite[Thm.~2.1.3(b)]{BruHer}. Now, let $\bar{x} = x_1,\ldots,x_e \in \p$ be a maximal $R$-regular sequence in $\p$. Set $y_i = (x_i,0)$ and $\bar{y} = y_1,\ldots,y_t$. The sequence $\bar{y}$ is evidently contained in $\q = \p \ltimes \dual$, and it is $(\te)$-regular by \lemref{te-2}. Hence $\bar{y}$ (or more precisely, the sequence $\nicefrac{y_1}{1},\ldots,\nicefrac{y_e}{1}$) is also $(\te)_\q$-regular, see \cite[Cor.~1.1.3(a)]{BruHer}. As noted above, the ring $(\te)_\q$ has depth (and Krull dimension) equal to $e$, and thus the $(\te)_\q$-module $(\te)_\q/(\bar{y})_\q \cong ((\te)/(\bar{y}))_\q$ has depth $0$, which means that the maximal ideal $\q_\q$ in $(\te)_\q$ is an associated prime of this module. It follows that $\q$ is an associated prime ideal of the $(\te)$-module $(\te)/(\bar{y})$; see \cite[Thm.~6.2]{Mat}. Note that the $(\te)$-module $(\te)/(\bar{y})$ has finite projective dimension (equal to $e$); this follows from \cite[Exerc.~1.3.6]{BruHer} and the fact that $\bar{y}$ is $(\te)$-regular. We are therefore in a position to apply \cite[Lem.~5.3.5(b)]{lnm}, which gives an inequality,
\begin{equation}
  \label{eq:depth-leq-sup-Tor}
  \depthR[(\te)_\q] - \depth[(\te)_\q]{M_\q} \,\leqslant\, \sup\{\, i \ | \, \Tor[\te]{i}{(\te)/(\bar{y})}{M} \neq 0 \,\}\;.
\end{equation}
Since $\bar{y}$ is an $(\te)$-regular sequence, the Koszul complex $\Koszul{\te}{\bar{y}}$ is a projective resolution of the $(\te)$-module $(\te)/(\bar{y})$; see
\cite[Thm.~16.5(i)]{Mat}. Similarly, $\Koszul{R}{\bar{x}}$ is a projective resolution of the $R$-module $R/(\bar{x})$. This explains the first and last isomorphism below; the middle isomorphism follows from \lemref{te-3}:
\begin{align*}
  \Tor[\te]{i}{(\te)/(\bar{y})}{M} 
  &\,\cong\, 
  \operatorname{H}_i(\Koszul{\te}{\bar{y}} \otimes_{\te} M)
  \\
  &\,\cong\, 
  \operatorname{H}_i(\Koszul{R}{\bar{x}} \otimes_R M)
  \\
  &\,\cong\, 
  \Tor{i}{R/(\bar{x})}{M}\;.
\end{align*}
The assumption \eqclbl{ii} and \prpref{sop} shows that $\Tor{i}{R/(\bar{x})}{M}=0$ for all $i>0$. This fact, combined with \eqref{depth-leq-sup-Tor}, gives the desired conclusion \eqref{depth-leq-zero}. 

\proofofimp{iii}{i} Recall from \secref{depth} that the category of finitely generated maximal CM $R$-modules is an additive category closed under direct summands. To prove \eqclbl{i} we apply Lenzing \cite[Prop.~2.1]{HLn83} (see also \rmkref{filtered-colimit}). That is, we must show that every homomorphism of $R$-modules $\varphi \colon N \to M$, where $N$ is finitely generated, factors through a maximal CM $R$-module. If we view $N$ and $M$ as modules over $\te$, then $N$ is still finitely generated and $M$ is Gorenstein flat by assumption \eqclbl{iii}. As $\te$ is Gorenstein, \cite[Lem.~10.3.6]{rha} yields that $\varphi$, as a homomorphism of $(\te)$-modules, factors through a finitely generated Gorenstein projective $(\te)$-module $G$. By viewing the hereby obtained factorization $N \to G \to M$ of $\varphi$, in the category of $(\te)$-modules, through the ring homomorphism $R \to \te$, we get a factorization of the original $\varphi$ in the category of $R$-modules. Thus, it remains to argue that $G$ is maximal CM over $R$, i.e.~that $\depth{G} = d$. By Iyengar and Sather-Wagstaff \cite[Lem.~2.8]{SInSSW04} applied to the local ring homomorphism $R \to \te$, we get that $\depth{G} = \depth[\te]{G}$. As $G$ is Gorenstein projective over $\te$, it is also maximal CM over $\te$, see  e.g.~\cite[Cor.~11.5.4]{rha}, so $\depth[\te]{G} = \dim[(\te)]=d$.
\end{proof}

In view of \dfnref{bigCM} (due to Hochster), we suggest the following:

\begin{dfn}
  \label{dfn:wbbCM}
  Let $A$ be a commutative noetherian local ring. An $A$-module $M$ is said to be \emph{weak balanced big CM} if every system of parameters of $A$ is a weak $M$-regular sequence. The category of such $A$-modules is denoted by $\wbbCM$ (where the ring $A$ is understood).
\end{dfn}

\begin{rmk}
  \label{rmk:bbCM-vs-wbbCM}
  Let $(A,\n,\ell)$ be any commutative noetherian local ring. If an $A$-module $M$ satisfies $\n M \neq M$, then $M$ is balanced big CM if (and only if) it is weak balanced big CM. Indeed, under the assumption $\n M \neq M$, a sequence of elements in $\n$ is $M$-regular if (and only if) it is weak $M$-regular.
\end{rmk}

With this terminology, the equivalence of conditions \eqclbl{i} and \eqclbl{ii} in Theorem~B can be expressed as follows: \emph{Over a CM ring $R$ with a dualizing module, a module is weak balanced big CM if and only if it is a direct limit of finitely generated maximal CM modules.} In symbols, the result can be written as
\begin{equation}
  \label{eq:lim}
  \wbbCM \,=\, \varinjlim(\MCM)\;.
\end{equation}

\begin{qst}
  Over a general commutative noetherian local ring (not assumed to be CM with a dualizing module), how are the classes $\wbbCM$ and $\MCM$ related?
\end{qst}

\begin{exa}
  \label{exa:wbbCM}
  As always in this paper, $R$ denotes the ring from \stpref{setup}.
  \begin{prt}
  \item It follows from \prpref{sop} that every flat $R$-module is weak balanced big CM.
  \item Recall that an $R$-module $M$ is said to be \emph{torsion-free} if every non-zerodivisor on $R$ is also a non-zero divisor on $M$. If $R$ has dimension $d=1$, then a system of parameters of $R$ is nothing but a non-zerodivisor on $R$, so in this case ``weak balanced big CM'' just means ``torsion-free''.
  \end{prt}
\end{exa}

Next we prove Theorem~A from the Introduction. 

\begin{proof}[Proof of Theorem~A]
  Let $M$ be a balanced big CM $R$-module. As $M$ is, in particular, weak balanced big CM, it can by Theorem~B be written as $M = \varinjlim M_i$ for some direct system \mbox{$\varphi_{\!ji} \colon M_i \to M_{\!j}$} ($i \leqslant j$) of finitely generated maximal CM $R$-modules. \emph{A priori}, some of the $M_i$'s could be zero, and hence they are not necessarily small CM modules. However, we can define a new direct system by setting $M'_i = M_i \oplus R$ and $\varphi'_{\!ji} = \varphi_{\!ji} \oplus 0$. This direct system clearly has the same direct limit as the original direct system (that is, $M$). And since $R$ is CM, every $M'_i$ is a non-zero maximal CM (that is, a small CM) module.
\end{proof}

The following example is due to Griffith \cite[Rem.~3.3]{Griffith}.

\begin{exa}
  \label{exa:Griffith1}
  Let \mbox{$R=k[\mspace{-2.5mu}[x,y]\mspace{-2.5mu}]$} be the ring of formal power series in two variables $x,y$ with coefficients in a field $k$. It is a regular, and hence a CM, local ring of dimension $d=2$. Set $E=E_R(R/(y))$ and $M=R \oplus E$. Multiplication by $x$ is an automorphism on $E$ since $x \notin (y)$, see e.g.~\cite[Thm.~3.3.8(1)]{rha}, so $x$ is a non-zerodivisor on $M$ with $M/xM \cong R/(x)$. It follows that $y$ is a non-zerodivisor on $M/xM$ and that $M/(x,y)M \cong R/(x,y) \neq 0$. Hence the system of parameters $x,y$ of $R$ is an $M$-regular sequence, so $M$ is a big CM $R$-module.

However, $M$ is not a balanced big CM module since the sequence $y,x$ is not $M$-regular. Indeed, multiplication by $y$ is not a monomorphism on $E$ (the entire submodule $R/(y)$ of $E$ is mapped to zero), and hence $y$ is a zerodivisor on $M$.

As $M$ is not balanced big CM, and since $(x,y)M \neq M$, it follows from   \rmkref{bbCM-vs-wbbCM} that $M$ is not even weak balanced big CM. Theorem~B now shows that 
$M$ can not be written as a direct limit of finitely generated maximal CM $R$-modules.
\end{exa}

\begin{obs}
  \label{obs:Griffith2}
  Recall from \secref{depth} that a finitely generated $R$-module $X$ is maximal CM if and only if $\depth{X} \geqslant d$ (and equality holds if $X \neq 0$). As the functors $\Ext{i}{k}{-}$ commute with direct limits, it follows from the definition of depth and from Theorem~B that for every $R$-module $M$, the following implication holds:
\begin{displaymath}
  \text{$M$ is weak balanced big CM} \quad \Longrightarrow \quad \depth{M} \geqslant d\;.
\end{displaymath}
The converse is not true, as the $R$-module $M$ from \exaref{Griffith1} is not weak balanced big CM, but it does have $\depth{M}=2=d$. Here is one way to see why:

Since $E$ is injective one has $\Ext{i}{k}{E}=0$ for all $i>0$. We also have $\Hom{k}{E}=0$. Indeed, since $k=R/(x,y)$ there is an isomorphism $\Hom{k}{E} \cong \{e \in E \,|\, (x,y)e=0\}$. And if $e \in E$ satisfies $(x,y)e=0$ then, in particular, $xe=0$ which implies that $e=0$ since $x$ is a non-zerodivisor on $E$.
Thus $\depth{E}=\infty$ and $\depth{M}=\depth{(R \oplus E)}=\depthR=2$.

A more explicit way to formulate these considerations are as follows. The $R$-module $M$ from \exaref{Griffith1} has the property that $\Ext{i}{k}{M}=0$ for $i=0,1$; but $M$ is not a direct limit of finitely generated $R$-modules with this property (of course, $M$ is the direct limit of \emph{some} direct system of finitely generated $R$-modules).
\end{obs}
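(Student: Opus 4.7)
The plan is to handle the implication by combining Theorem~B with the fact that $\Ext{i}{k}{-}$ commutes with filtered colimits, and then to verify the failure of the converse via the module $M=R\oplus E$ from \exaref{Griffith1}. For the implication, suppose $M$ is weak balanced big CM. By Theorem~B one can write $M = \varinjlim M_i$ with each $M_i$ finitely generated maximal CM. Since $R$ is noetherian, $k$ admits a resolution by finitely generated (hence finitely presented) projectives, and this makes $\Ext{i}{k}{-}$ commute with filtered colimits. The recalled characterisation of maximal CM modules gives $\Ext{i}{k}{M_i} = 0$ for every $i < d$ and every index, so passing to the colimit yields $\Ext{i}{k}{M} = 0$ for $i < d$, which by \eqref{depth} is exactly $\depth{M} \geqslant d$.

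For the failure of the converse, I would take the module $M = R \oplus E$ from \exaref{Griffith1} with $E = E_R(R/(y))$. That example already shows $M$ is not weak balanced big CM (via \rmkref{bbCM-vs-wbbCM}, since $(x,y)M \neq M$). What remains is to check $\depth{M}=2=d$, and since $\Ext{i}{k}{R \oplus E}$ splits as $\Ext{i}{k}{R} \oplus \Ext{i}{k}{E}$, this reduces to computing $\depthR$ (which is $2$ since $R$ is the $2$-dimensional regular local ring of \exaref{Griffith1}) together with $\depth{E}$, and taking the minimum.

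The crux will be showing $\depth{E} = \infty$. Injectivity of $E$ handles positive $i$, giving $\Ext{i}{k}{E} = 0$ for $i \geqslant 1$ at once. For $i = 0$, the standard identification $\Hom{k}{E} \cong \{\,e \in E \mid (x,y)e = 0\,\}$ reduces the problem to showing every such $e$ vanishes; but $xe = 0$ together with the fact recalled in \exaref{Griffith1} that multiplication by $x$ is an automorphism of $E$ (as $x \notin (y)$) forces $e = 0$. Hence every Ext group vanishes, $\depth{E} = \infty$, and $\depth{M} = \min\{\depthR,\depth{E}\} = 2 = d$, completing the counterexample.
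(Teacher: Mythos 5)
Your proposal is correct and follows essentially the same route as the paper: the implication is obtained from Theorem~B together with the fact that $\Ext{i}{k}{-}$ commutes with direct limits (you add the standard justification via a finitely generated projective resolution of $k$), and the counterexample is verified identically, using injectivity of $E$ for $i\geq 1$, the identification of $\Hom{k}{E}$ with $(x,y)$-torsion, and the fact that $x$ acts invertibly on $E$. The only cosmetic difference is that you phrase the final step as taking a minimum of depths, while the paper writes $\depth{M}=\depthR$ directly; these are the same computation.
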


We mention another easy consequence of Theorem~B.

\begin{prp}
  \label{prp:classic}
  The following conditions are equivalent:
  \begin{eqc}
  \item $R$ is regular.

  \item Every weak balanced big CM $\,R$-module is flat.\footnote{\ Recall from  \exaref{wbbCM}(a) that a flat $R$-module is always weak balanced big CM.}
  \end{eqc}
\end{prp}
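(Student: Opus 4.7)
The plan is to handle each implication using \thmref[Theorem~]{} B (or rather the characterization \eqref{lim}) as the bridge between weak balanced big CM modules and finitely generated maximal CM modules.

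For \proofofimp[.]{i}{ii} I would first observe that over a regular local ring, every finitely generated maximal CM $R$-module is free: such a module $X$ has $\depth{X}=d$, which equals the global dimension, so by the Auslander--Buchsbaum formula $X$ has projective dimension $0$, and a finitely generated projective module over a local ring is free. Given a weak balanced big CM module $M$, Theorem~B writes $M = \varinjlim M_i$ with each $M_i$ finitely generated maximal CM, hence free. By Govorov--Lazard, a direct limit of finitely generated free modules is flat, so $M$ is flat.

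For \proofofimp[.]{ii}{i} the strategy is to produce a single finitely generated maximal CM module whose freeness forces $R$ to be regular. The natural candidate is a high syzygy of the residue field: choose a minimal free resolution $\cdots \to F_1 \to F_0 \to k \to 0$ and let $\Omega^d k$ be its $d$-th syzygy. A standard depth count (``depth lemma'') shows $\depth{\Omega^d k} \geqslant d$, so either $\Omega^d k = 0$ or $\Omega^d k$ is finitely generated maximal CM; in either case it lies in $\MCM \subseteq \wbbCM$. By hypothesis \eqclbl{ii}, $\Omega^d k$ is flat, and a finitely generated flat module over a noetherian local ring is free. Consequently $k$ has finite projective dimension (at most $d$), and Auslander--Buchsbaum--Serre gives that $R$ is regular.

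Neither direction involves a real obstacle: the only thing to be slightly careful about is the degenerate case $\Omega^d k = 0$, which occurs precisely when $d = 0$ (so $R$ is an Artinian CM local ring and the claim ``$R$ is regular'' degenerates to ``$R$ is a field'', which is automatic since then the residue field $k$ itself is maximal CM and hence free by assumption). Everything else is a direct application of Theorem~B together with Govorov--Lazard in one direction and Auslander--Buchsbaum--Serre in the other.
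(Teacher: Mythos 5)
Your proof is correct and takes essentially the same route as the paper: both directions go through Theorem~B together with the Govorov--Lazard description of flat modules, reducing the statement to the standard fact that $R$ is regular precisely when every finitely generated maximal CM module is free. You spell out the Auslander--Buchsbaum and syzygy arguments behind that last equivalence, which the paper simply calls ``tantamount to $R$ being regular''; the only small slip is the remark that $\Omega^d k = 0$ occurs precisely when $d=0$ --- in a minimal resolution $\Omega^d k$ is in fact never zero (when $d=0$ it equals $k$), though this does not affect the argument.
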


\begin{proof}
By Lazard \cite[Thm.~1.2]{DLz69} an $R$-module is flat if and only if it is a direct limit of finitely generated projective $R$-modules. And by Cor.~1.4 in \emph{loc.~cit.}~every finitely generated flat $R$-module is projective. By Theorem~B an $R$-module is weak balanced big CM if and only if it is a direct limit of finitely generated maximal CM modules. And every finitely generated weak balanced big CM $R$-module is maximal CM.  It follows that condition \eqclbl{ii} holds if and only if every finitely generated maximal CM module is projective, and this is tantamount to $R$ being regular.
\end{proof}

In view of \exaref{wbbCM}(b) we have the following special case of the implication \mbox{\eqclbl{i}$\,\Rightarrow\,$\eqclbl{ii}} in the result above: \emph{Over a principal ideal domain every torsion-free $R$-module is flat.} This classic result can of course be found in most textbooks on homological algebra; see for example Rotman \cite[Cor.~3.50]{Rotman}.

\section{Applications in relative homological algebra}
\label{sec:rha}

In this final section, we give applications of Theorem~B from the Introduction in relative homological algebra. Special attention is paid to preenvelopes and covers by maximal CM modules and by (weak) balanced big CM modules. First we recall some relevant notions.

  Let $\mathcal{A}$ be a class of objects in a category $\mathcal{M}$, and let $M$ be an object in $\mathcal{M}$. Following Enochs and Jenda \cite[Def.~5.1.1]{rha}, a morphism
$\pi \colon A \to M$ with $A \in \mathcal{A}$ is an \emph{$\mathcal{A}$-precover} of $M$ if every other morphism $\pi' \colon A' \to M$ with $A' \in \mathcal{A}$ factors through $\pi$, as illustrated below.
\begin{displaymath}
  \xymatrix@R=0.8pc{
    A' \ar@{-->}[dd] \ar[dr]^-{\pi'} & {} \\
    {} & M \\
    A \ar[ur]_-{\pi} & {}
   }
\end{displaymath}
An $\mathcal{A}$-precover $\pi \colon A \to M$ is called an \emph{$\mathcal{A}$-cover} if every endomorphism $\varphi$ of $A$ that satisfies $\pi\varphi = \pi$ is an automorphism. The notion of \emph{$\mathcal{A}$-(pre)envelopes} is categorically dual to the notion of $\mathcal{A}$-(pre)covers.\footnote{\ Let $\mathcal{M}=\mathsf{Mod}(A)$ be the category of (left) modules over a ring $A$. If $\mathcal{A}=\mathsf{Prj}(A)$ is the class of projective $A$-modules, then an $\mathcal{A}$-cover is exactly the same as a \emph{projective cover} in the sense of Bass \cite{HBs60}. If $\mathcal{A}=\mathsf{Inj}(A)$ is the class of injective $A$-modules, then an $\mathcal{A}$-envelope is exactly the same as a \emph{injective hull} in the sense of Eckmann and Schopf \cite{EckmannSchopf}. Proofs of these facts can be found in Xu \cite[Thms.~1.2.11 and 1.2.12]{xu}.}

Let $\mathcal{M}$ be an abelian category (in our case, $\mathcal{M}$ will be the category of all modules over some ring or the category of finitely generated modules over a noetherian ring). For a class of objects $\mathcal{A}$ in $\mathcal{M}$ we define
\begin{align*}
  {}^\perp\mathcal{A} &= \{ M \in \mathcal{M} \ |\, \Ext[\mathcal{M}]{1}{M}{A} =0 \, \text{ for all } \, A \in \mathcal{A}\,\}\;, \quad \text{and} 
  \\
  \mathcal{A}^\perp &= \{ M \in \mathcal{M} \ |\, \Ext[\mathcal{M}]{1}{A}{M} =0 \, \text{ for all } \, A \in \mathcal{A}\,\}\;.
\end{align*}
A pair $(\mathcal{A},\mathcal{B})$ of classes of objects in $\mathcal{M}$ is called a \emph{cotorsion pair} if $\mathcal{A}^\perp = \mathcal{B}$ and $\mathcal{A} = {}^\perp\mathcal{B}$. A cotorsion pair $(\mathcal{A},\mathcal{B})$ is called:
\begin{prt}

\item[--] \emph{Hereditary} if  the class $\mathcal{A}$ is \emph{resolving}; this means that $\mathcal{A}$ contains all projective objects in $\mathcal{M}$ and that $\mathcal{A}$ is closed under extensions and kernels of epimorphisms. See 
\cite[Def.~2.2.8(i) and Lem.~2.2.10]{GobelTrlifaj} for further details.

\item[--] \emph{Complete} if the class $\mathcal{A}$ has \emph{enough projectives}; this means that for every $M \in \mathcal{M}$ there exists an exact sequence $0 \to B \to A \to M \to 0$ with $A \in \mathcal{A}$ and $B \in \mathcal{B}$. Equivalently, $\mathcal{B}$ has \emph{enough injectives}, that is for every $M \in \mathcal{M}$ there is an exact sequence $0 \to M \to B \to A \to 0$ with $A \in \mathcal{A}$ and $B \in \mathcal{B}$. See \cite[\S7.1]{rha} and \cite[Lem.~2.2.6]{GobelTrlifaj}. 

Note that if $(\mathcal{A},\mathcal{B})$ is a complete cotorsion pair, then every $M \in \mathcal{M}$ has a (special) $\mathcal{A}$-precover and a (special) $\mathcal{B}$-preenvelope.

\item[--] \emph{Perfect} if every  $M \in \mathcal{M}$ has an $\mathcal{A}$-cover and a $\mathcal{B}$-envelope. See \cite[Def.~2.3.1]{GobelTrlifaj}.

\end{prt}

Recall that $\MCM$ denotes the category of finitely generated maximal CM modules; see \secref{depth}. We denote by $\I$ the categeory of finitely generated modules with finite injective dimension. As always, $R$ is the ring from \stpref{setup}.

A classic result of Ischebeck \cite{FIs69} (see also \cite[Exerc. 3.1.24]{BruHer}) shows that $\Ext{i}{M}{I}=0$ for all $M \in \MCM$, all $I \in \I$, and all $i>0$. In particular, one has $\MCM^\perp \supseteq \I$ and $\MCM \subseteq {}^\perp\I$. Combining this fact with the existence of \emph{maximal CM approximations} and \emph{hulls of finite injective dimension}, proved by Auslander and Buchweitz in \cite[Thm.~A]{MAsROB89}, one gets:

\begin{thm}
  \label{thm:MCM-cotorsion-pair}
On the abelian category of finitely generated $R$-modules, the pair $(\MCM,\I)$ is a complete hereditary cotorsion pair. In particular, every finitely generated $R$-module has a $\MCM$-precover and an $\I$-preenvelope. \qed
\end{thm}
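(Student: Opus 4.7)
The plan is to assemble the theorem from the two ingredients already gathered in the paragraph preceding the statement: Ischebeck's vanishing (giving $\MCM^\perp \supseteq \I$ and $\MCM \subseteq {}^\perp \I$) and the twin Auslander--Buchweitz approximations (providing, for every finitely generated $R$-module $M$, short exact sequences $0 \to J \to X \to M \to 0$ and $0 \to M \to J' \to X' \to 0$ with $X, X' \in \MCM$ and $J, J' \in \I$). These approximation sequences do all the heavy lifting; the only real content is turning the one-sided containments into equalities.

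First I would verify the cotorsion-pair axioms by splitting arguments. Given $N \in \MCM^\perp$, apply the hull of finite injective dimension to obtain $0 \to N \to J' \to X' \to 0$ with $J' \in \I$ and $X' \in \MCM$; since $\Ext{1}{X'}{N}=0$, the sequence splits, so $N$ is a direct summand of $J' \in \I$ and hence $N \in \I$ (finite injective dimension is inherited by summands, as one sees from the Ext-characterization). Symmetrically, for $N \in {}^\perp \I$, the maximal CM approximation $0 \to J \to X \to N \to 0$ splits because $\Ext{1}{N}{J}=0$, and closure of $\MCM$ under direct summands (recalled in \secref{depth}) forces $N \in \MCM$. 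Completeness is then immediate: the two Auslander--Buchweitz sequences \emph{are} already in the form required by the definition of a complete cotorsion pair, so every finitely generated $R$-module automatically has an $\MCM$-precover and an $\I$-preenvelope.

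For the hereditary property I would check that $\MCM$ is resolving. Finitely generated free modules have depth $d$ since $R$ is CM, so $\MCM$ contains the projectives; closure under extensions and kernels of epimorphisms then follows from the standard depth lemma applied to an exact sequence $0 \to A \to B \to C \to 0$, since knowing that two of the three depths equal $d$ forces the third to be $d$ as well. The main conceptual step, such as it is, is simply recognising that the Auslander--Buchweitz approximations already constitute the ``enough projectives / enough injectives'' data of a complete cotorsion pair; the splitting-summand arguments and the depth-lemma bookkeeping for the hereditary axiom are then automatic.
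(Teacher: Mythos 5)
Your proof is correct and follows the route the paper leaves implicit behind its \qed: Ischebeck gives the one-sided inclusions, the two Auslander--Buchweitz approximation sequences give completeness directly and, via splitting, upgrade the inclusions to equalities, and the depth lemma gives the resolving property. One small overstatement: the depth lemma does \emph{not} force the third depth in $0 \to A \to B \to C \to 0$ to equal $d$ whenever two of them do---if $A,B \in \MCM$ the cokernel can have depth $d-1$ (e.g.\ $0 \to R \xrightarrow{x} R \to R/(x) \to 0$)---but the two cases you actually invoke (extension closure with $A,C \in \MCM$, and kernel-of-epimorphism closure with $B,C \in \MCM$) are exactly the ones the lemma does cover, so the argument as applied is sound.
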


We now prove Theorems~C and D from the Introduction.

\begin{proof}[Proof of Theorem~C]
By Crawley-Boevey \cite[Thm.~(4.2)]{WCB94}, the assertion is equivalent to \smash{$\varinjlim(\MCM)$} being closed under products. By Theorem~B, the class \smash{$\varinjlim(\MCM)$} is exactly the class of weak balanced big CM $R$-modules, and it follows from \prpref{sop} and \cite[Thm.~3.2.26]{rha} that this class is closed under products.
\end{proof}

\begin{proof}[Proof of Theorem~D]
It follows from \prpref{sop} that $\wbbCM$ is resolving, and that it is closed under products, coproducts, and direct summands. Since \smash{$\wbbCM = \varinjlim(\MCM)$}, see \eqref{lim}, Lenzing \cite[Prop.~2.2]{HLn83} shows that $\wbbCM$ is closed under pure submodules and pure quotient modules. Thus \cite[Thm.~3.4]{HJ08} yields that $(\wbbCM,\wbbCM^\perp)$ is a perfect cotorsion pair, which is hereditary as $\wbbCM$ is resolving.
It follows from Rada and Saorin \cite[Cor.~3.5(c)]{RadaSaorin} that every $R$-module has a $\wbbCM$-preenvelope. 
\end{proof}

We end this paper by proving the existence of (non-weak) balanced big CM covers for certain types of modules.

\begin{prp}
  \label{prp:bbCM-cover}
  Let $M$ be any $R$-module. If $\m M \neq M$, then $M$ has a surjective cover with respect to the class of balanced big CM $R$-modules.
\end{prp}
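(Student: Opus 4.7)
The plan is to obtain the asserted cover as the $\wbbCM$-cover whose existence is guaranteed by Theorem~D, and then upgrade it in two ways: show it is surjective, and show that the covering module is actually balanced big CM (not merely weak balanced big CM), using the hypothesis $\mathfrak{m}M \neq M$.

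First, Theorem~D provides a $\wbbCM$-cover $\pi \colon W \to M$. To see that $\pi$ is surjective, I would note that $R$ itself is a finitely generated maximal CM module, hence belongs to $\wbbCM$; also $\wbbCM$ is closed under coproducts (by \prpref{sop}, or by the fact proved in the course of Theorem~D). Thus any free presentation $\bigoplus R \twoheadrightarrow M$ is a morphism from an object of $\wbbCM$ to $M$, and must therefore factor through the precover $\pi$, forcing $\pi$ to be surjective.

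Next, I would promote $W$ from weak balanced big CM to balanced big CM. Since $\pi$ is surjective, if we had $\mathfrak{m}W = W$ then applying $\pi$ would give $\mathfrak{m}M = M$, contradicting the hypothesis. Hence $\mathfrak{m}W \neq W$, and \rmkref{bbCM-vs-wbbCM} then says that $W$ being weak balanced big CM is equivalent to its being balanced big CM. So $W$ is in fact balanced big CM.

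Finally, I would verify that $\pi$ is a cover with respect to the (smaller) class of balanced big CM modules. Every balanced big CM module is a fortiori weak balanced big CM, so any morphism $\pi' \colon W' \to M$ with $W'$ balanced big CM factors through $\pi$ by the $\wbbCM$-precover property; this gives the precover condition in the subclass. The cover condition---that any endomorphism $\varphi$ of $W$ with $\pi\varphi = \pi$ is an automorphism---is inherited without change from the $\wbbCM$-cover property, since it is a statement about endomorphisms of $W$ itself. Hence $\pi$ is the desired surjective balanced big CM cover, and there is essentially no technical obstacle beyond remembering to check that the hypothesis $\mathfrak{m}M \neq M$ is used precisely at the point of invoking \rmkref{bbCM-vs-wbbCM}.
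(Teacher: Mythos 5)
Your proof is correct and follows essentially the same route as the paper: obtain the $\wbbCM$-cover from Theorem~D, observe it is surjective because free modules lie in $\wbbCM$, deduce $\m W \neq W$ from $\m M \neq M$, and invoke \rmkref{bbCM-vs-wbbCM} to upgrade $W$ to balanced big CM. The only difference is that you spell out the (routine) check that an $\mathcal{A}$-cover landing in a subclass $\mathcal{B}\subseteq\mathcal{A}$ is automatically a $\mathcal{B}$-cover, which the paper leaves implicit.
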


\begin{proof}
  It follows from Theorem~D that $M$ has a cover with respect to the class of \emph{weak} balanced big CM $R$-modules, say, \mbox{$\pi \colon W \to M$} where $W$ is in $\wbbCM$. The homomorphism $\pi$ must be surjective since every projective $R$-module belongs to $\wbbCM$. As $\pi$ is surjective, so is the induced map $W/\m W \to M/\m M$, and it follows that $\m W \neq W$. Hence $W$ is balanced big CM $R$-modules, see \rmkref{bbCM-vs-wbbCM}, and $\pi$ is the desired cover.
\end{proof}

\prpref{bbCM-cover} is related to the main result in \cite[Thm.~5.6]{Simon09} by Simon. This result asserts that, over a CM ring with a dualizing module, every \emph{complete} module has a surjective cover w.r.t.~the class $\mathcal{X}$ of \emph{complete} big CM modules \emph{including} the zero module.

In \prpref{bbCM-cover} the assumption $\m M \neq M$ is essential; here is a pathological example.

\begin{exa}
  \label{exa:bbCM-cover}
  The zero module $M=0$ does not have a cover with respect to the class of balanced big CM $R$-modules. Indeed, suppose that \mbox{$W \to 0$} is such a cover. Since the zero endo\-morphism \mbox{$0 \colon W \to W$} makes the diagram
\begin{displaymath}
  \xymatrix@R=0.75pc{
    W \ar[dd]_-{0} \ar[dr] & {} \\
   {} & 0 \\
    W \ar[ur] & {}
  }
\end{displaymath}
commutative, it follows from the definition of a cover that \mbox{$0 \colon W \to W$} is an automorphism. This means that $W=0$, which is impossible as $W$ is a big CM module.
\end{exa}

\def\cprime{$'$}
  \providecommand{\arxiv}[2][AC]{\mbox{\href{http://arxiv.org/abs/#2}{\sf
  arXiv:#2 [math.#1]}}}
  \providecommand{\oldarxiv}[2][AC]{\mbox{\href{http://arxiv.org/abs/math/#2}{\sf
  arXiv:math/#2
  [math.#1]}}}\providecommand{\MR}[1]{\mbox{\href{http://www.ams.org/mathscinet-getitem?mr=#1}{#1}}}
  \renewcommand{\MR}[1]{\mbox{\href{http://www.ams.org/mathscinet-getitem?mr=#1}{#1}}}
\providecommand{\bysame}{\leavevmode\hbox to3em{\hrulefill}\thinspace}
\providecommand{\MR}{\relax\ifhmode\unskip\space\fi MR }
% \MRhref is called by the amsart/book/proc definition of \MR.
\providecommand{\MRhref}[2]{%
  \href{http://www.ams.org/mathscinet-getitem?mr=#1}{#2}
}
\providecommand{\href}[2]{#2}

\end{document}